\newcommand{\authorfootnotes}{\renewcommand\thefootnote{\@fnsymbol\c@footnote}}%
\newcommand{\N}{\mathbb{N}}
\newcommand{\cC}{\mathcal{C}}
\newcommand{\F}{\mathcal{F}}
\newcommand{\G}{\mathcal{G}}
\newcommand{\cH}{\mathcal{H}}
\newcommand{\vertex}{\node[vertex]}
\tikzstyle{vertex}=[circle, draw, inner sep=0pt, minimum size=6pt]
\newtheorem{theorem}{Theorem}
\newtheorem{lemma}{Lemma}
\newtheorem{prop}{Proposition}
\newtheorem{problem}{Problem}
\newtheorem{definition}{Definition}
\newcommand{\gedge}{\gamma'}
\newcommand{\Gedge}{\Gamma'}
\begin{document}

\title{On well-edge-dominated graphs}

\author{$^1$Sarah E. Anderson, $^2$Kirsti Kuenzel and $^3$Douglas F. Rall \\
\\
$^1$Department of Mathematics \\
University of St. Thomas \\
St. Paul, Minnesota  USA\\
\small \tt Email: ande1298@stthomas.edu  \\
\\
$^2$Department of Mathematics \\
Trinity College \\
Hartford, Connecticut  USA \\
\small \tt Email: kwashmath@gmail.com\\
\\
$^3$Department of Mathematics \\
Furman University \\
Greenville, SC, USA\\
\small \tt Email: doug.rall@furman.edu}

\date{}
\maketitle

\begin{abstract}
A graph is said to be well-edge-dominated if all its minimal edge dominating sets are minimum. It is known that every well-edge-dominated graph $G$ is also equimatchable, meaning that every maximal matching in $G$ is maximum. In this paper, we show that if $G$ is a connected, triangle-free, nonbipartite, well-edge-dominated graph, then $G$ is one of three graphs. We also characterize the well-edge-dominated split graphs and Cartesian products. In particular, we show that a connected Cartesian product $G\Box H$ is well-edge-dominated, where $G$ and $H$ have order at least $2$, if and only if $G\Box H = K_2 \Box K_2$.

\end{abstract}
{\small \textbf{Keywords:} well-edge-dominated, split graph, equimatchable, Cartesian product } \\
\indent {\small \textbf{AMS subject classification:} 05C69, 05C76, 05C75}

\section{Introduction}

A set $F$ of edges in a graph $G$ is an \emph{edge dominating set} if every edge of $G$ that is not in $F$ is adjacent to at least one edge in $F$.  Mitchell and
Hedetniemi~\cite{mh-1977} initiated the study of edge domination by presenting a linear algorithm that finds a smallest edge dominating set in a tree.  Yannakakis and
Gavril~\cite{yg-1980} showed that it is NP hard to find an edge dominating set of minimum size even when restricted to planar graphs or subcubic bipartite graphs.  See~\cite{bp-2008, hk-1993,
hc-1995} for additional results on the complexity of finding a minimum edge dominating set.  The set consisting of all the vertices that are incident with at least
one edge in a minimum edge dominating set is a vertex  dominating set in a nontrivial connected graph.  It follows that the (ordinary) domination number of such a graph is at most twice  the size of its smallest edge dominating set.  Senthilkumar, Venkatakrishna and Kumar~\cite{svk-2020} characterized the trees that achieve equality of these numbers,
and Baste, F\"{u}rst, Henning, Mohr and Rautenbach~\cite{bfhmr-2020} gave an improvement of this relationship when the graph is regular.  They conjectured that the domination number is at most the edge domination number in every regular graph.  Klostermeyer and Yeo~\cite{ky-2015} investigated edge domination in grid graphs.
See~\cite{av-1998, c-2010, t-1993} for other problems involving edge domination.

The graphs for which all maximal matchings have the same cardinality were first studied independently by Lewin~\cite{l-1974} and Meng~\cite{m-1974} in 1974.
These two authors presented different characterizations of this class of graphs that have come to be known as \emph{equimatchable}.
Lesk, Plummer and Pulleyblank~\cite{lpp-1983} gave a characterization of equimatchable graphs that gave rise to a polynomial time algorithm for recognizing
membership in this class of graphs.  Since then the structure of several subclasses of equimatchable graphs have been investigated.  Frendrup, Hartnell
and Vestergaard~\cite{fhv-2010} proved that a connected equimatchable graph with no cycles of length less than $5$ is either  a $5$-cycle, a $7$-cycle or
belongs to the family $\cC$ that contains $K_2$ and all the bipartite graphs one of whose partite sets consists of all its support vertices.
B\"{u}y\"{u}k\c{c}olak, G\"{o}z\"{u}pek and S.~\"{O}zkan~\cite{bgo-2021} provided a complete structural characterization of the connected, triangle-free equimatchable
graphs that are not bipartite.

If $M$ is a maximal matching  in a graph $G$, then every edge not in $M$ is adjacent to at least one edge in $M$. That is, $M$ is
an edge dominating set of $G$.  A maximal matching in $G$ corresponds to a maximal independent set in the line graph of $G$.  Since line graphs are claw-free, the independent
domination number (the smallest cardinality among the independent dominating sets) of the line graph equals its domination number.  Translating this back to
the original graph, it means the size of a smallest maximal matching in $G$ and the edge domination number of $G$ coincide.  Frendrup, et al. also proved
in~\cite{fhv-2010} that every graph in $\cC$ has the additional property that all of its minimal edge dominating sets have the same cardinality.
In this paper we study graphs that have this latter property and call them \emph{well-edge-dominated}.  In particular, we completely characterize three classes of connected well-edge-dominated graphs.

Our main result on triangle-free, nonbipartite well-edge-dominated graphs is the following result, which is proved in Section~\ref{sec:K3-free}.
We use the characterization, mentioned above, by B\"{u}y\"{u}k\c{c}olak, et al.~\cite{bgo-2021}, of the equimatchable graphs satisfying the hypothesis of Theorem~\ref{thm:K3-freeWED} and determine which of these belong to the smaller class of well-edge-dominated graphs.
The graph $H^*$ is defined in Section~\ref{sec:K3-free}.

\begin{theorem} \label{thm:K3-freeWED}
If $G$ is a connected, nonbipartite, well-edge-dominated graph of girth at least $4$, then $G \in \{C_5, C_7, H^*\}$.
\end{theorem}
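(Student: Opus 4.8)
The plan is to combine the structural characterization of B\"{u}y\"{u}k\c{c}olak, G\"{o}z\"{u}pek and \"{O}zkan~\cite{bgo-2021} with a direct analysis of inclusion-minimal edge dominating sets. Since a well-edge-dominated graph is equimatchable, and $G$ is by hypothesis connected, triangle-free (girth at least $4$) and nonbipartite, $G$ occurs in their classification of the connected triangle-free nonbipartite equimatchable graphs. That classification consists of $C_5$ and $C_7$ together with a bounded number of sporadic graphs and one or more parametrized families, each of which is, roughly, a $C_5$- or $C_7$-core with additional bipartite ``support-vertex'' structure attached. The theorem then reduces to deciding, for each member of this list, whether all of its minimal edge dominating sets have the same cardinality, and to checking that exactly $C_5$, $C_7$ and $H^*$ survive this test.

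For $C_5$ and $C_7$ the verification is short and direct: one checks that every inclusion-minimal edge dominating set of $C_5$ has two edges and every one of $C_7$ has three, so in particular no ``wasteful'' non-matching minimal set can occur. For $H^*$ one carries out a finite case analysis, exploiting its symmetry to keep the number of cases small, showing that all of its minimal edge dominating sets have equal size. The crux is to eliminate every remaining graph in the list. For each such graph I would exhibit two minimal edge dominating sets of different sizes: a minimum one, coming from a minimum maximal matching (recall, as noted in the introduction, that the edge domination number equals the minimum size of a maximal matching because line graphs are claw-free), and an ``inefficient'' one obtained by first selecting, in the attached bipartite part, edges incident with leaves or with low-degree vertices — each such edge dominates comparatively few others — and then greedily completing to a minimal edge dominating set. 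Because the families in the characterization grow without bound, the inefficient set is strictly larger than the minimum one once the graph is large enough; the finitely many smallest members of each family, together with the sporadic graphs, are then checked by hand.

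The step I expect to be the main obstacle is this last construction: producing, uniformly across each infinite family, an edge dominating set that is genuinely minimal (no chosen edge is redundant) yet provably larger than the edge domination number. This requires understanding precisely which edges of the attached structure are forced and how much slack the $C_5$- or $C_7$-core contributes to the count; the triangle-free hypothesis is what keeps adjacencies under control, ensuring that an edge chosen near a leaf cannot inadvertently dominate a large portion of the graph. I also expect to use the equimatchability of $G$ a second time, to constrain the shape of the attached bipartite parts (for instance, that one side of each such part consists entirely of support vertices), which is exactly what bounds the number of exceptional small cases and makes the $H^*$ verification tractable.
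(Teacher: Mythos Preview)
Your high-level plan is correct and matches the paper's: invoke the B\"{u}y\"{u}k\c{c}olak--G\"{o}z\"{u}pek--\"{O}zkan classification of connected triangle-free nonbipartite equimatchable graphs, verify directly that $C_5$, $C_7$, and $H^*$ are well-edge-dominated, and eliminate every other graph on the list.

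Where you diverge from the paper is in the elimination step. You propose to build, for each graph in the classification, an explicit ``inefficient'' minimal edge dominating set larger than the matching number, using edges near leaves or low-degree vertices. The paper instead relies almost entirely on a reduction lemma (Lemma~\ref{lem:matchremove}): if $G$ is well-edge-dominated and $M$ is any matching, then $G-N_e[M]$ is again well-edge-dominated. Combined with the easy fact that $K_{r,s}$ is not well-edge-dominated when $2\le r<s$, the paper simply exhibits, for each family in $\F\cup\G$, a small matching $M$ (typically one or two edges) such that $G-N_e[M]$ has an unbalanced complete bipartite component. This handles each infinite family in one stroke, with no need to produce or verify minimality of a large edge dominating set; only one small case ($\F_{12}$ with $n=1$) is done by writing down two explicit minimal edge dominating sets.

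A concern with your proposed route: your description of the graphs in the classification as ``a $C_5$- or $C_7$-core with attached bipartite support-vertex structure,'' and your intention to pick edges ``incident with leaves,'' does not match what the families $\F$ and $\G$ actually look like. They are obtained by vertex-multiplication of a fixed graph $G^*$ and typically have \emph{no} leaves once the parameters exceed $1$; the ``support-vertex'' picture you invoke belongs to the girth-$\ge 5$ bipartite setting of Frendrup--Hartnell--Vestergaard, not here. Your explicit-construction idea can still be made to work, but it will require a fresh analysis of each of the thirteen parametrized families rather than the leaf-based heuristic you describe, and verifying minimality of the inefficient set case by case is exactly the labor the paper's reduction lemma is designed to avoid.
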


A graph is a \emph{split graph} if its vertex set admits a partition into two sets, one of which is independent and the other which induces a complete graph.
We show that a connected split graph is well-edge-dominated if and only if it is a star, a complete graph of order at most $4$, a graph obtained
from $C_5$ by adding two adjacent chords, or belongs to one of two families of graphs constructed from $K_4$.  These are defined in Section~\ref{sec:SplitWED}.

In Section~\ref{sec:CartesianWED} we finish by showing that $C_4$ is the only nontrivial, connected, well-edge-dominated Cartesian product.

\begin{theorem} \label{thm:cpWED}
If $G$ and $H$ are two connected, nontrivial graphs, then $G\Box H$ is well-edge-dominated if and only if $G\Box H = K_2 \Box K_2$.
\end{theorem}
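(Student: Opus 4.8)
The plan has two parts. First, that $C_4=K_2\Box K_2$ is well-edge-dominated: in $C_4$ no edge is adjacent to the edge opposite it, so every edge dominating set contains at least two edges, while \emph{every} pair of edges of $C_4$ is an edge dominating set and no three-edge set is minimal (it properly contains such a pair). Hence every minimal edge dominating set of $C_4$ has exactly two edges, so $C_4$ is well-edge-dominated with $\gamma'(C_4)=2$.

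The substantial part is the converse, which I would prove by contraposition: assuming $G,H$ connected, $|V(G)|,|V(H)|\ge 2$ and $F:=G\Box H\ne C_4$, I show $F$ is not well-edge-dominated. Since the edge domination number of any graph equals the minimum size of a maximal matching, and a maximum matching is a minimal edge dominating set, every well-edge-dominated graph is equimatchable; so it suffices to exhibit two maximal matchings of $F$ of different sizes. Two easy observations drive the argument: (i) for $uv\in E(G)$ and $ab\in E(H)$ the vertices $(u,a)$ and $(v,b)$ are non-adjacent with two common neighbors, so $F$ is not complete; and (ii) $\operatorname{diam}(F)=\operatorname{diam}(G)+\operatorname{diam}(H)$, so $\operatorname{diam}(F)=2$ would force $G=K_m$, $H=K_n$, and since $K_m\Box K_n$ is triangle-free only when $m=n=2$ this would give $F=C_4$.

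If $F$ has a perfect matching $M$ — which in particular covers every prism $G\Box K_2$ via the ``vertical'' matching — then an equimatchable $F$ would have all of its maximal matchings of size $|M|=\tfrac12|V(F)|$, that is, $F$ would be a connected graph in which every maximal matching is perfect. Such graphs are, by Sumner's classification, exactly the $K_{2k}$ and the $K_{k,k}$; but $F$ is not complete by (i), and $F=K_{k,k}$ forces $F=C_4$ by (ii). This contradicts $F\ne C_4$, so $F$ has two maximal matchings of different sizes.

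It remains to handle the case where $F$ has no perfect matching; then $|V(G)|,|V(H)|\ge 3$ (since $G\Box K_2$ always has the vertical perfect matching). Here I would argue directly. A maximum matching $M^*$ misses a nonempty independent set $U^*$ of vertices, all of whose neighbors are $M^*$-covered; I want an independent set $U$ with $|U|>|U^*|$ such that $F-U$ has a perfect matching, for then any perfect matching of $F-U$ is a maximal matching of $F$ with uncovered set exactly $U$, hence of size $\tfrac12(|V(F)|-|U|)<|M^*|$. To produce $U$ one uses that $F\ne C_4$ guarantees a vertex $(u,a)$ of degree $\deg_G(u)+\deg_H(a)\ge 3$: leaving $(u,a)$ together with suitably chosen neighbors uncovered and rerouting $M^*$ through the rest of the $G$-layer (or $H$-layer) containing $(u,a)$ liberates an edge. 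I expect this last case to be the main obstacle — deciding which vertex sets can be the uncovered set of a maximal matching is entangled with the connectivity of both $G$ and $H$ — and a clean execution will likely need a careful choice of $(u,a)$ and of the rerouting, supplemented by hand-checking a few small products such as $P_3\Box P_3$ and $K_{1,3}\Box P_3$.
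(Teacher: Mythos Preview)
Your argument for $K_2\Box K_2$ and your treatment of the case where $F=G\Box H$ admits a perfect matching are correct and match the paper's Lemma~\ref{lem:CPnotWED} closely (your diameter/triangle check is just an elementary verification that $K_{2n}$ and $K_{n,n}$ are Cartesian-prime for $n\ge 2$).

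The genuine gap is Case~2, and you have essentially acknowledged it yourself. You propose to find an independent set $U$ with $|U|>|U^*|$ such that $F-U$ has a perfect matching, but you give no construction; the phrases ``rerouting $M^*$ through the rest of the $G$-layer'' and ``supplemented by hand-checking a few small products'' are not a proof. More importantly, your strategy aims to show that $F$ is not \emph{equimatchable}, which is strictly stronger than ``not well-edge-dominated.'' You would have to prove that every Cartesian product of two connected graphs of order at least~$3$, neither with a perfect matching, fails to be equimatchable. That may well be true, but it is not obvious, and nothing in your outline explains why the desired $U$ always exists.

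The paper's route through this case (Lemma~\ref{lem:nopmnotWED}) is quite different and genuinely uses the full well-edge-dominated hypothesis, not just equimatchability. It builds a minimal edge dominating set $Q$ of $F$ that is \emph{not} a matching: on certain $G$-fibers it places a minimal edge dominating set $N_1$ of $G-J$ (for a maximal independent set $J$ of $G$) rather than a matching. Comparing $|Q|$ with the size of a maximal matching of $F$ and invoking well-edge-domination forces $\gamma'(G-J)=\alpha'(G)$; from this one deduces that $G-J$ has a perfect matching and is randomly matchable, and a short neighbor argument then collapses $G$ to $K_5$, a contradiction. So the key idea you are missing is to exploit non-matching minimal edge dominating sets in a fiber; staying entirely within matchings, as you do, leaves the hard case unresolved.
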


\section{Preliminaries}

All the graphs considered in this paper are simple and have  finite order.  Let $G$ be a graph with vertex set $V(G)$ and edge set $E(G)$.  We write $n(G)=|V(G)|$.
If $n(G) \ge 2$, then $G$ is \emph{nontrivial}.
For a positive integer $k$ the set of positive integers no larger than $k$ is denoted $[k]$.  Although edges are $2$-element subsets of vertices, for simplicity we will shorten the notation of an edge $\{u,v\}$ to $uv$.  If $X \subseteq E(G)$, then $G-X$ is the graph with vertex set $V(G)$ and edge set $E(G)-X$.  For graphs $G$ and $H$, the Cartesian product $G\Box H$ has vertex set
$\{(g,h)\,:\, g\in V(G), h \in V(H)\}$.  Two vertices $(g_1,h_1)$ and $(g_2,h_2)$ are adjacent in $G\Box H$ if either $g_1=g_2$ and $h_1h_2\in E(H)$ or $h_1=h_2$ and
$g_1g_2 \in E(G)$.  For $g \in V(G)$ the \emph{$H$-fiber} $^gH$ is the subgraph of $G\Box H$ induced by the set $\{(g,h)\,:\,h\in V(H)\}$.  Similarly, the \emph{$G$-fiber}
$G^h$ for a given vertex $h \in V(H)$ denotes the subgraph induced by $\{(g,h)\,:\,g\in V(G)\}$.  Note that $^gH$ is isomorphic to $H$ and $G^h$ is isomorphic to $G$.

Two distinct edges $e$ and $f$ in a graph $G$ are \emph{adjacent} if $e \cap f \neq \emptyset$ and are \emph{independent} if $e \cap f = \emptyset$.  A vertex $x$ of $G$ is \emph{incident} to an edge $e$ if $x \in e$.   If $X \subseteq E(G)$, then the set of vertices \emph{covered} by $X$ is denoted by $S(X)$ and is defined by $S(X)=\{u \in V(G) \,:\, u \text{ is incident to an edge in } X\}$.  Let  $f\in E(G)$ and let $F \subseteq E(G)$.  The \emph{closed edge neighborhood} of $f$ is the set $N_e[f]$ consisting of $f$ together with all edges in $G$ that are adjacent to $f$.
The \emph{closed edge neighborhood} of $F$ is the set $N_e[F]$ defined by $N_e[F]=\cup_{f\in F}N_e[f]$.  Let $f \in F$.  The edge $f$ is said to \emph{dominate} the set $N_e[f]$. An edge $g$ is called a \emph{private edge neighbor} of $f$ with respect to $F$ if $g \in N_e[f]-N_e[F-\{f\}]$.
If $N_e[F]= E(G)$, then $F$ is called an \emph{edge dominating set} of $G$.  The \emph{edge domination number} of $G$, denoted by $\gedge(G)$, is the smallest cardinality of an edge dominating set in $G$, and the \emph{upper edge domination
number} of $G$ is the largest cardinality, $\Gedge(G)$, of a minimal edge dominating set.  A \emph{matching} in $G$ is a set of independent edges.  The \emph{matching number}
of $G$, denoted $\alpha'(G)$, is the number of edges in a matching of largest cardinality in $G$, while the \emph{lower matching number} is the number of edges, denoted by
$i'(G)$, in a smallest maximal matching.  Any maximal matching $M$ in $G$ is clearly a minimal edge dominating set of $G$, which gives
\[\gedge(G)\le i'(G) \le \alpha'(G) \le \Gedge(G)\,.\]
A graph $G$ is called \emph{equimatchable} if $i'(G) = \alpha'(G)$ and is called \emph{well-edge-dominated} if $\gedge(G)=\Gedge(G)$.  Using the inequality
above it is clear that the class of well-edge-dominated graphs is a subclass of the equimatchable graphs.

It is clear that a graph is well-edge-dominated if and only if each of its components is well-edge-dominated.  We use this
fact throughout the paper together with the following lemmas.

A very useful tool in our study of well-edge-dominated graphs is the following lemma, which is the ``edge version'' of a fact used by Finbow,
Hartnell and Nowakowski in~\cite{fhn-1988}.  It follows from the fact that $M \cup D_1$ and $M \cup D_2$ are both minimal edge dominating sets of $G$
for any matching $M$ and any pair $D_1$ and $D_2$ of minimal edge dominating sets of the graph $G-N_e[M]$.

\begin{lemma} \label{lem:matchremove}
If $G$ is a well-edge-dominated graph and $M$ is any matching in $G$, then $G-N_e[M]$ is well-edge-dominated.
\end{lemma}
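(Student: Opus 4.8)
The plan is to reduce the whole statement to one structural claim: \emph{for any matching $M$ in $G$ and any minimal edge dominating set $D$ of $G' := G - N_e[M]$, the set $M \cup D$ is a minimal edge dominating set of $G$}. Granting this, the lemma follows at once. Given two minimal edge dominating sets $D_1, D_2$ of $G'$, the claim makes both $M \cup D_1$ and $M \cup D_2$ minimal edge dominating sets of $G$, so $|M \cup D_1| = |M \cup D_2|$ because $G$ is well-edge-dominated; and since each $D_i \subseteq E(G) \setminus N_e[M]$ is disjoint from $M \subseteq N_e[M]$, this says $|M| + |D_1| = |M| + |D_2|$, i.e. $|D_1| = |D_2|$. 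Hence $\gedge(G') = \Gedge(G')$ and $G'$ is well-edge-dominated. (Throughout, we use that $G'$ has the same vertex set as $G$, so two edges lying in both graphs are adjacent in $G'$ precisely when they are adjacent in $G$; this lets domination statements pass between $G$ and $G'$ freely.)

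To prove the claim I would first check that $M \cup D$ dominates $E(G)$: an edge $e \in N_e[M]$ is already dominated by $M$, while an edge $e \notin N_e[M]$ lies in $E(G')$ and is therefore dominated there by $D$, hence in $G$. The substance is \emph{minimality}, which I would establish by exhibiting, for each $f \in M \cup D$, a private edge neighbor with respect to $M \cup D$. The key preliminary observation is that if $f = xy \in M$ then no edge of $(M \cup D) \setminus \{f\}$ meets $x$ or $y$: no other edge of $M$ does because $M$ is a matching, and no edge of $D$ does because any edge meeting $x$ or $y$ is adjacent to $f \in M$ and hence lies in $N_e[M]$, so cannot belong to $D \subseteq E(G) \setminus N_e[M]$.

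With that in hand the two cases are short. If $f = xy \in M$, then $f$ itself is a private edge neighbor of $f$: since no edge of $(M \cup D) \setminus \{f\}$ touches $x$ or $y$, we have $f \notin N_e[(M \cup D) \setminus \{f\}]$, so deleting $f$ leaves $f$ undominated. If $f \in D$, pick a private edge neighbor $g$ of $f$ with respect to $D$ in $G'$ (it exists since $D$ is a minimal edge dominating set of $G'$). Then $g \in E(G')$, so $g \notin N_e[M]$; moreover $g \notin D \setminus \{f\}$ and $g$ is not adjacent in $G'$ to any edge of $D \setminus \{f\}$, hence --- since two edges of $E(G')$ are adjacent in $G'$ exactly when they are adjacent in $G$ --- not adjacent in $G$ to any such edge either. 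Combining, $g \notin N_e[M] \cup N_e[D \setminus \{f\}] = N_e[(M \cup D) \setminus \{f\}]$ while $g \in N_e[f]$, so $g$ is a private edge neighbor of $f$ with respect to $M \cup D$. Thus $M \cup D$ is minimal, proving the claim.

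The only step demanding any care --- the main obstacle, such as it is --- is the minimality of the $M$-edges inside $M \cup D$: one has to notice that $M$ being a matching together with $D$ avoiding $N_e[M]$ forces the endpoints of each edge of $M$ to be untouched by everything else in $M \cup D$, so each such edge is its own private edge neighbor. The rest is bookkeeping with the definitions, together with the routine point that deleting edges (and no vertices) does not alter adjacency among the surviving edges.
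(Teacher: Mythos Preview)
Your argument is correct and follows exactly the route the paper indicates: the paper's justification is the single sentence that $M\cup D_1$ and $M\cup D_2$ are both minimal edge dominating sets of $G$ for any minimal edge dominating sets $D_1,D_2$ of $G-N_e[M]$, and you have supplied a careful verification of precisely that claim. There is nothing to add.
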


The next two results show that several common graph families contain only a small number of well-edge-dominated graphs.

\begin{lemma}\label{lem:Kn}
A complete graph of order $n$ is well-edge-dominated if and only if $n \le 4$.
\end{lemma}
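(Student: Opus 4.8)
The plan is to work out exactly which edge subsets of $K_n$ (on vertex set $\{v_1,\dots,v_n\}$) are edge dominating sets. The key observation is that an edge $xy$ is dominated by a set $F$ if and only if $\{x,y\}\cap S(F)\neq\emptyset$, so $F$ is an edge dominating set of $G$ precisely when $V(G)\setminus S(F)$ is independent. In $K_n$ an independent set has at most one vertex, hence $F\subseteq E(K_n)$ is an edge dominating set if and only if $|S(F)|\geq n-1$. In particular a maximal matching covers all but at most one vertex, so it is an edge dominating set, and therefore $\gedge(K_n)\leq\lfloor n/2\rfloor$.

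For the forward direction I would handle $n\in\{1,2,3,4\}$ directly, showing every minimal edge dominating set has size $\gedge(K_n)$. For $n\le 3$ this is immediate: $K_1$ has no edges; in $K_2$ the edge set has one element; and in $K_3$ a single edge dominates all three edges, so a minimal edge dominating set cannot have two or more edges, giving $\Gedge(K_3)=1=\gedge(K_3)$. For $n=4$, the crucial point is that \emph{any} two edges of $K_4$ form an edge dominating set: if independent they cover all four vertices, and if adjacent they cover three vertices, leaving at most one uncovered. Hence any edge dominating set of $K_4$ of size at least three properly contains a dominating pair and so is not minimal; thus $\Gedge(K_4)=2=\gedge(K_4)$, and $K_n$ is well-edge-dominated for all $n\le 4$.

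For the converse ($n\geq 5$), the plan is to exhibit a minimal edge dominating set strictly larger than $\gedge(K_n)$. Let $F=\{v_1v_i : 2\le i\le n-1\}$, the star at $v_1$ with the edge $v_1v_n$ deleted; then $S(F)=\{v_1,\dots,v_{n-1}\}$, so $F$ is an edge dominating set by the observation above. To see $F$ is minimal, fix $j\in\{2,\dots,n-1\}$ and consider the edge $v_jv_n$: it is not in $F$, and among the edges of $F$ it is adjacent only to $v_1v_j$; hence $v_jv_n\in N_e[v_1v_j]-N_e[F-\{v_1v_j\}]$, so $F-\{v_1v_j\}$ fails to dominate $v_jv_n$. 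Therefore $F$ is a minimal edge dominating set with $|F|=n-2$, so $\Gedge(K_n)\geq n-2$. Since $\gedge(K_n)\leq\lfloor n/2\rfloor<n-2$ for every $n\geq 5$, we get $\gedge(K_n)<\Gedge(K_n)$, i.e. $K_n$ is not well-edge-dominated.

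I do not anticipate a real obstacle; the only care needed is the small-$n$ bookkeeping in the forward direction (especially the two-edge case analysis for $K_4$) and checking that the private edge neighbor $v_jv_n$ really is adjacent to no other edge of the star, which makes each edge of $F$ indispensable. As an aside, one could instead deduce the cases $n\geq 7$ from $n=5,6$ via Lemma~\ref{lem:matchremove}, since deleting $N_e[\{e\}]$ for a single edge $e$ of $K_n$ leaves $K_{n-2}$ as its only nontrivial component; but the star construction disposes of all $n\geq 5$ at once.
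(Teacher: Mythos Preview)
Your proof is correct and follows essentially the same approach as the paper: both use the star $\{v_1v_i : 2\le i\le n-1\}$ as a minimal edge dominating set of size $n-2$ and compare it to the matching bound $\lfloor n/2\rfloor$ for $n\ge 5$. Your treatment is somewhat more detailed---you make explicit the characterization $|S(F)|\ge n-1$ for edge dominating sets of $K_n$ and spell out the $n\le 4$ cases, whereas the paper handles those by inspection---but the argument is the same.
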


\begin{proof}
Using the definition we see that the complete graphs of order at most $4$ are well-edge-dominated.  For the converse suppose $n \ge 5$.
 Label the vertices of $K_{n}$ as $1, \dots, n$ and consider the set $D = \{12, 13, \dots, 1(n-1)\}$. We claim that $D$ is a minimal edge dominating set. Indeed, $D - \{1j\}$ is not an edge dominating set since $jn$ is not adjacent to any edge in $D - \{1j\}$. Therefore, $D$ is in fact a minimal edge dominating set of cardinality $n-2$ where $n\ge 5$. On the other hand, we can choose a matching of $K_n$ of cardinality $\left\lfloor \frac{n}{2}\right\rfloor$. Note that $n-2 > \frac{n}{2}$ when $n \ge 5$. Thus, $K_n$ is not well-edge-dominated.
\end{proof}

Any star is well-edge-dominated and we show in Theorem~\ref{thm:wedrm} that $K_{n,n}$ is well-edge-dominated for any $n \ge 1$. No
other complete bipartite graph is well-edge-dominated as the following lemma shows.

\begin{lemma} \label{lemma:nonwed}
If $2 \le r < s$, then $K_{r, s}$ is not well-edge-dominated.
\end{lemma}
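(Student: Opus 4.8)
The plan is to exhibit, for any $K_{r,s}$ with $2 \le r < s$, a minimal edge dominating set whose cardinality strictly exceeds $\gedge(K_{r,s})$, thereby violating the defining equality $\gedge = \Gedge$. First I would pin down the edge domination number: since every edge of $K_{r,s}$ meets the smaller side, a set of $r$ independent edges (a perfect matching on the small side, which exists because $r \le s$) covers every vertex of the small side and hence dominates every edge, so $\gedge(K_{r,s}) \le r$; conversely any edge dominating set must touch all $r$ vertices on the small side (an edge incident to a small-side vertex $v$ can only be dominated by an edge through $v$), so $\gedge(K_{r,s}) = r$. This is the routine part.

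The heart of the argument is to build a minimal edge dominating set $D$ with $|D| > r$. Label the small side $A = \{a_1,\dots,a_r\}$ and the large side $B = \{b_1,\dots,b_s\}$. Take $D$ to consist of the $r-1$ independent edges $a_1 b_1, a_2 b_2, \dots, a_{r-1} b_{r-1}$ together with the two edges $a_r b_r$ and $a_r b_{r+1}$ (here $r+1 \le s$ is guaranteed since $r < s$); so $|D| = r+1$. This $D$ is an edge dominating set because every vertex of $A$ is covered. For minimality I would check each edge has a private edge neighbor: removing $a_i b_i$ for $i \le r-1$ undominates the edge $a_i b_s$ (say), which is adjacent only to edges through $a_i$ or through $b_s$, and no remaining edge of $D$ uses $a_i$ or $b_s$; removing $a_r b_r$ undominates $a_r b_r$ itself unless some other edge of $D$ meets $\{a_r,b_r\}$ — but the only other edge of $D$ through $a_r$ is $a_r b_{r+1}$, and $b_r$ is used by no edge of $D - \{a_r b_r\}$, wait, $b_r$ is also unused, so $a_r b_r$ would still be dominated by $a_r b_{r+1}$; so instead the private neighbor of $a_r b_r$ should be an edge $a_i b_r$ for some $i \le r-1$, which after removing $a_r b_r$ is no longer adjacent to any edge of $D$ — good, and symmetrically $a_r b_{r+1}$ has private neighbor $a_i b_{r+1}$.

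Since $r+1 > r = \gedge(K_{r,s})$, the set $D$ is a minimal edge dominating set that is not minimum, so $K_{r,s}$ is not well-edge-dominated. The main obstacle is purely bookkeeping: one must present the construction cleanly and verify the private-edge-neighbor condition for the two special edges $a_r b_r$ and $a_r b_{r+1}$, since (as the sketch above shows) the naive choice of private neighbor fails and one needs to use the edges $a_i b_r$ and $a_i b_{r+1}$ with $i \le r-1$ — this is where the hypothesis $r \ge 2$ is essential, as it guarantees such an index $i$ exists. An alternative, perhaps cleaner, presentation would mimic the proof of Lemma~\ref{lem:Kn}: fix $a_1$, take $D' = \{a_1 b_1, \dots, a_1 b_{s-1}\}$ and argue $D'$ is minimal (removing $a_1 b_j$ undominates $a_2 b_j$ when $r \ge 2$) with $|D'| = s - 1 \ge r > \gedge$, which handles every case $2 \le r < s$ at once and avoids the fiddly split.
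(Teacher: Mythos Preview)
Your main construction does not work: the set $D = \{a_1b_1,\dots,a_{r-1}b_{r-1},\,a_rb_r,\,a_rb_{r+1}\}$ is \emph{not} a minimal edge dominating set. If you delete $a_rb_r$, the remaining $r$ edges still cover every vertex of $A$ (the vertex $a_r$ remains covered by $a_rb_{r+1}$), and since every edge of $K_{r,s}$ has an endpoint in $A$, what is left is still an edge dominating set. Your proposed private neighbor $a_ib_r$ with $i\le r-1$ is in fact adjacent to $a_ib_i$, which stays in $D$; you overlooked the $a_i$ endpoint. The same failure occurs symmetrically for $a_rb_{r+1}$, so neither of the two ``doubled'' edges at $a_r$ has a private edge neighbor.

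Your alternative $D' = \{a_1b_1,\dots,a_1b_{s-1}\}$ also fails, for the opposite reason: it is not an edge dominating set at all, since the edge $a_2b_s$ is adjacent to no member of $D'$. The analogy with Lemma~\ref{lem:Kn} breaks down because in $K_n$ the leftover vertex is adjacent to all the covered vertices, whereas here $b_s$ is not adjacent to $a_1$.

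The fix is simple and is exactly what the paper does: take \emph{all} $s$ edges through $a_1$, namely $\{a_1b_1,\dots,a_1b_s\}$. This covers all of $B$ and hence is edge dominating; removing $a_1b_j$ leaves $a_2b_j$ undominated (this is where $r\ge 2$ is used), so it is minimal of size $s$. Pair it with the maximal matching $\{a_1b_1,\dots,a_rb_r\}$, which is a minimal edge dominating set of size $r\ne s$, and you are done --- there is no need to compute $\gedge$ at all. (Incidentally, your parenthetical justification that every edge dominating set must touch each vertex of $A$ is also false: the star at $a_1$ just described misses $a_2,\dots,a_r$. The correct lower-bound argument is that an edge dominating set must cover all of $A$ or all of $B$.)
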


\begin{proof}
Assume $2 \le r<s$ and write the partite sets of $K_{r, s}$ as $\{x_1, \dots, x_r\}$ and $\{y_1, \dots, y_s\}$. Note that $\{x_1y_1, \ldots, x_1y_s\}$
and $\{x_1y_1, x_2y_2, \ldots, x_ry_r\}$ are two minimal edge dominating sets of different cardinalities.  Therefore, $K_{r,s}$ is not well-edge-dominated.
\end{proof}

\section{Randomly matchable graphs}
A graph is said to be {\it randomly matchable} if every maximal matching is a perfect matching.  That is, a randomly matchable graph is
an equimatchable graph whose matching number is half its order.  Sumner~\cite{s-1979} determined all the randomly matchable graphs.

\begin{theorem}{\rm (\cite{s-1979})} \label{thm:rm}
A connected graph is randomly matchable if and only if it is isomorphic to $K_{2n}$ or $K_{n,n}$ for $n \ge 1$.
\end{theorem}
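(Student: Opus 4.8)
The plan is to prove the nontrivial implication by induction on $n$, where $|V(G)| = 2n$; the reverse implication is routine. For it, one checks that $K_{2n}$ is randomly matchable because a maximal matching missing two vertices $x, y$ could be enlarged by the edge $xy$, and that $K_{n,n}$ is randomly matchable because a maximal matching of $K_{n,n}$ missing a vertex $x$ must saturate every neighbor of $x$, i.e. the whole part not containing $x$, which forces all $n$ vertices of the part containing $x$ to be saturated as well --- impossible, since $x$ is not. The base case $n = 1$ is immediate since $K_2 = K_{1,1}$.

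For the inductive step, assume $G$ is connected and randomly matchable with $|V(G)| = 2n$ and $n \ge 2$, and fix an edge $uv \in E(G)$. The first step is a reduction lemma: \emph{$G - u - v$ is randomly matchable}. Indeed, if $N$ is a maximal matching of $G - u - v$, then $N \cup \{uv\}$ is a maximal matching of $G$ --- an edge outside it that meets $\{u, v\}$ is adjacent to $uv$, and every other edge outside it is adjacent to $N$ by maximality of $N$ --- hence a perfect matching of $G$; consequently $N$ saturates $V(G) - \{u,v\}$, so $N$ is perfect. The second step is to upgrade this: \emph{$G - u - v$ is actually connected}. If not, pick two of its components $C_1$ and $C_2$; each is randomly matchable, so $|V(C_1)|$ and $|V(C_2)|$ are even. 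Since $G$ is connected, some vertex of $\{u,v\}$ has a neighbor in $C_1$; match it into $C_1$ and then take a maximal matching of the remainder of $C_1$, which by parity leaves a single unsaturated vertex $x_1 \in V(C_1)$, all of whose neighbors (they lie in $V(C_1) \cup \{u,v\}$) are saturated. Do the same with the other vertex of $\{u,v\}$ and $C_2$, obtaining $x_2$, and add a perfect matching of each remaining component. The result is a maximal matching of $G$ that misses $x_1$ and $x_2$, contradicting that $G$ is randomly matchable. So $H := G - u - v$ is connected and randomly matchable of order $2n - 2$, and by induction $H \cong K_{2n-2}$ or $H \cong K_{n-1,n-1}$. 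A similar argument shows $G$ has no vertex of degree $1$ (a degree-$1$ vertex $p$ with neighbor $q$: match $q$ to another of its neighbors, extend to a maximal matching, and $p$ is missed).

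It remains to reconstruct $G$ from $H$ by determining which edges join $u$ and $v$ to $V(H)$. If $H \cong K_{2n-2}$ with $2n - 2 \ge 4$ (so $H$ contains a triangle), the goal is to show $u$ and $v$ are each adjacent to all of $V(H)$, giving $G \cong K_{2n}$. The engine here is: if $v$ has a neighbor in $V(H)$ but $u$ is nonadjacent to some $w \in V(H)$, then one can match $v$ into the clique $V(H)$, cover all but two of the remaining vertices of $V(H)$, and arrange for the two unsaturated vertices to be $u$ together with a nonneighbor of $u$ in $V(H)$ --- a maximal matching that is not perfect, a contradiction. Combined with the symmetric statement and the fact that connectivity forces $u$ or $v$ to have a neighbor in $V(H)$, this yields the claim. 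If instead $H \cong K_{n-1,n-1}$ with parts $X$ and $Y$, one argues analogously but must examine the possible neighborhoods $N(u) \cap V(H)$ and $N(v) \cap V(H)$: bad maximal matchings, built by matching $u$ or $v$ into a part and then saturating all but an appropriately chosen leftover vertex, should rule out every configuration except ``$u$ complete to one part and $v$ complete to the other'', which gives $G \cong K_{n,n}$ (with the small cases $H = K_2$ absorbed here, where $G$ may instead be $K_4$).

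I expect this last attachment analysis to be the main obstacle. It splits into several sub-cases according to whether the neighborhoods of $u$ and $v$ in $H$ lie inside one part of $H$ or meet both parts, and in each case the certifying maximal matching must be constructed carefully, paying attention to parity and to which vertex a maximal matching of a clique or of a balanced complete bipartite graph is allowed to leave unsaturated. In addition, the coincidences $K_2 = K_{1,1}$ and $K_4 = K_{2 \cdot 2}$ require separate bookkeeping so that the conclusion ``$G \cong K_{n,n}$'' is not asserted when in fact $G \cong K_{2n}$, and vice versa.
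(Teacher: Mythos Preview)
The paper does not prove this theorem; it is quoted from Sumner~\cite{s-1979} and stated without proof, so there is no argument in the paper to compare your proposal against.

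For what it is worth, your inductive plan is a standard route to Sumner's result, but two points in the sketch need tightening. First, in the connectivity step you write ``do the same with the other vertex of $\{u,v\}$ and $C_2$'', which presupposes that this other vertex has a neighbor in $C_2$; this fails exactly when that vertex has degree~$1$, so the no-leaf claim should be established before the connectivity claim rather than after. Second, as you yourself flag, the attachment analysis when $H\cong K_{n-1,n-1}$ is only outlined: you have not actually exhibited the ``bad'' maximal matchings that rule out every configuration except $N(u)=X$, $N(v)=Y$ (up to symmetry), and there are several sub-cases (e.g.\ $u$ meeting both parts, or $u$ and $v$ both meeting the same part) each requiring its own construction. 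Until those are written out, the proposal is a plan rather than a proof.
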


Using Theorem~\ref{thm:rm} we can now show which randomly matchable graphs are well-edge-dominated.

\begin{theorem}\label{thm:wedrm}
A connected graph $G$ containing a perfect matching is well-edge-dominated if and only if $G=K_4$ or $G = K_{n,n}$ for $n \ge 1$.
\end{theorem}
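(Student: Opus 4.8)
The plan is to combine Theorem~\ref{thm:rm} with the structural lemmas already established. Since $G$ has a perfect matching and a well-edge-dominated graph is equimatchable, $G$ is an equimatchable graph whose matching number equals half its order; that is, $G$ is randomly matchable. By Theorem~\ref{thm:rm}, $G$ is isomorphic to $K_{2n}$ or $K_{n,n}$ for some $n\ge 1$. It then remains only to decide which of these two infinite families are in fact well-edge-dominated.

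For the complete graphs $K_{2n}$, I would invoke Lemma~\ref{lem:Kn}: $K_{2n}$ is well-edge-dominated exactly when $2n\le 4$, i.e.\ $n\le 2$. The case $n=1$ gives $K_2$, which is also $K_{1,1}$, and $n=2$ gives $K_4$; for $n\ge 3$ the graph $K_{2n}$ is not well-edge-dominated. For the complete bipartite graphs $K_{n,n}$, I would argue directly that each one \emph{is} well-edge-dominated. The key observation is that in $K_{n,n}$ with partite sets $\{x_1,\dots,x_n\}$ and $\{y_1,\dots,y_n\}$, every minimal edge dominating set must cover every vertex: if some vertex $x_i$ were uncovered by a set $F$, then every edge incident to $x_i$ would be undominated (no edge of $F$ meets $x_i$, and no edge of $F$ can be adjacent to $x_iy_j$ unless it meets $x_i$ or $y_j$ — but the latter would require... ) — actually the cleanest route is: if $F$ is an edge dominating set that does not cover $x_i$, then for the edge $x_iy_j$ to be dominated, $F$ must contain an edge at $y_j$, for every $j$; and symmetrically if some $y_j$ is uncovered then $F$ contains an edge at each $x_i$. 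I would show that a minimal edge dominating set of $K_{n,n}$ is precisely a minimum edge dominating set by arguing it cannot cover all $2n$ vertices unless it is a perfect matching, and if it fails to cover some vertex then minimality forces it down to a small set of a fixed size; in either case $|F|=n$.

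Concretely, for $K_{n,n}$ I would split into two cases according to whether $F$ covers all vertices. If $F$ covers $S(F)=V(K_{n,n})$, then $|S(F)|=2n$ and $|F|\ge n$; minimality then forces $F$ to be a perfect matching, so $|F|=n$. If $F$ does not cover some vertex, say $x_1\notin S(F)$, then domination of the edges $x_1y_1,\dots,x_1y_n$ forces $F$ to contain an edge incident with each $y_j$, so $y_1,\dots,y_n\in S(F)$ and $|F|\ge n$; but $\{x_2y_1,x_3y_2,\dots\}$-type minimality (removing any edge of $F$ leaves some $y_j$ with no incident edge of $F$ and hence $x_1y_j$ undominated) shows $F$ can be taken with exactly one edge at each $y_j$ and no two sharing an $x$-vertex beyond what is forced, giving $|F|=n$. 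Either way $\gamma'(K_{n,n})=\Gamma'(K_{n,n})=n$, so $K_{n,n}$ is well-edge-dominated. Assembling the two families, the well-edge-dominated graphs with a perfect matching are exactly $K_4$ and $K_{n,n}$ for $n\ge 1$ (noting $K_2=K_{1,1}$ is subsumed).

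The main obstacle I anticipate is the $K_{n,n}$ direction: one must be careful to verify that \emph{every} minimal edge dominating set has size $n$, not merely that the minimum and maximum matchings do — the inequality chain $\gamma'\le i'\le\alpha'\le\Gamma'$ only pins down the middle two, so the ``uncovered vertex'' case genuinely needs the domination-forcing argument above to rule out minimal edge dominating sets of size strictly between $\gamma'$ and $\Gamma'$. Everything else is a direct appeal to Theorems~\ref{thm:rm} and Lemma~\ref{lem:Kn}.
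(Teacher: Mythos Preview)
Your approach matches the paper's: deduce random matchability from equimatchability plus the perfect matching, apply Sumner's Theorem~\ref{thm:rm}, rule out $K_{2n}$ for $n\ge 3$ via Lemma~\ref{lem:Kn}, and verify $K_{n,n}$ directly. The paper's $K_{n,n}$ argument avoids your case split by observing at once that any edge dominating set $D$ must cover one entire partite set $A$ (otherwise an edge between an uncovered vertex of $A$ and an uncovered vertex of $B$ is undominated), whence $|D|\ge n$, and if $|D|>n$ then some vertex of $A$ meets two edges of $D$ and one of them can be removed while $D$ still covers $A$ --- so minimality forces $|D|=n$.
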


\begin{proof}
Suppose first that $G$ contains a perfect matching and is well-edge-dominated. It follows that $G$ is equimatchable and every maximal matching is of size $n(G)/2$. Therefore, $G$ is randomly matchable and by Theorem~\ref{thm:rm}, $G = K_{2n}$ or $G= K_{n,n}$ for $n \ge 1$.  By Lemma~\ref{lem:Kn}, $K_{2n}$ for $n \ge 3$ is not well-edge-dominated. It follows that $G = K_4$ or $G = K_{n,n}$ for $n \ge 1$.

In the other direction, suppose $G=K_4$ or $G=K_{n,n}$ for $n \ge 1$. One can easily verify that $K_4$ is well-edge-dominated. Therefore, we shall assume $G = K_{n,n}$ and let $A$ and $B$ be the partite sets of $G$. We show that $G$ is well-edge-dominated. Let $D$ be an edge dominating set of $G$. Suppose $D$ does not cover $a \in A$ and $b \in B$. Then $ab$ is not dominated by $D$, which is a contradiction. Thus, we may assume $D$ covers $A$ which implies $|D| \ge n$. Suppose that $|D| > n$. It follows that some vertex of $A$ is incident to two edges in $D$, say $e$ and $f$. Note that $D - \{e\}$ is an edge dominating set of $G$ since $D - \{e\}$ covers $A$ and every edge of $G$ is incident to exactly one vertex of $A$. Thus, $|D| = n$ and $G$ is well-edge-dominated.

\end{proof}

\section{Triangle-free nonbipartite graphs} \label{sec:K3-free}

In this section we prove there are only three nonbipartite, triangle-free, connected,  well-edge-dominated graphs.  These
three graphs are the $5$-cycle, the $7$-cycle and $H^*$, which is depicted in Figure~\ref{fig:triangle-wed}.
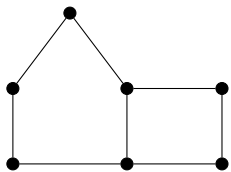
\begin{figure}[ht!]
\begin{center}
\begin{tikzpicture}[auto]
	
	\vertex (21) at (-5,-4) [label=above:$$,scale=.75pt,fill=black]{};
	\vertex (22) at (-5.75,-5) [label=above:$$,scale=.75pt,fill=black]{};
	\vertex (23) at (-4.25, -5) [label=above:$$,scale=.75pt,fill=black]{};
	\vertex (24) at (-3,-5) [label=above:$$,scale=.75pt,fill=black]{};
	\vertex (25) at (-5.75, -6) [label=right:$$,scale=.75pt,fill=black]{};
	\vertex (26) at (-4.25,-6) [label=below:$$,scale=.75pt,fill=black]{};
	\vertex (27) at (-3, -6) [label=left:$$,scale=.75pt,fill=black]{};

	\path
		
		(21) edge (22)
		(21) edge (23)
		(22) edge (25)
		(23) edge (26)
		(25) edge (26)
		(24) edge (27)
		(27) edge (26)
		(23) edge (24);
\end{tikzpicture}
\end{center}
\caption{The graph $H^*$}
\label{fig:triangle-wed}
\end{figure}

We will use the structural characterization of the class of triangle-free, equimatchable graphs in the recent paper of B\"{u}y\"{u}k\c{c}olak, G\"{o}z\"{u}pek and \"{O}zkan~\cite{bgo-2021}.  To describe their characterization, they defined several graph families using the following notation. Let $H$ be a graph on $k$ vertices $v_1, v_2, \dots, v_k$ and let $m_1, m_2, \dots, m_k$ be nonnegative integers. Then $H(m_1, m_2, \dots, m_k)$ denotes the graph obtained from $H$ by repeatedly replacing each vertex $v_i$ with an independent set of $m_i$ vertices, each of which has the same neighborhood as $v_i$.  For example, using the graph $G^*$ in Figure~\ref{fig:G*}, we see that
$G^*(1,1,1,0,1,1,1,1,0,0,0)=C_7$ and $G^*(2,0,0,0,3,0,0,0,2,3,0)=K_{4,6}$.

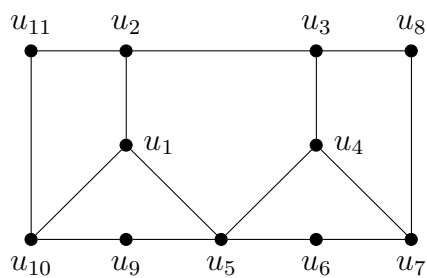
\begin{figure}[h!]
\begin{center}
\begin{tikzpicture}[scale=1.25]

	\vertex (8) at (5,0) [fill, scale=.75,label=below:$u_{10}$] {};
	\vertex (9) at (5,2) [fill, scale=.75,label=above:$u_{11}$] {};
	\vertex (10) at (6,2) [fill, scale=.75,label=above:$u_2$] {};
	\vertex (11) at (8,2) [fill, scale=.75,label=above:$u_3$] {};
	\vertex (12) at (9,2) [fill, scale=.75,label=above:$u_8$] {};
	\vertex (13) at (9,0) [fill, scale=.75,label=below:$u_7$] {};
	\vertex (14) at (8,0) [fill, scale=.75,label=below:$u_6$] {};
	\vertex (15) at (7,0) [fill, scale=.75,label=below:$u_5$] {};
	\vertex (16) at (6,0) [fill, scale=.75,label=below:$u_9$] {};
	\vertex (17) at (6, 1) [fill, scale=.75,label=right:$u_1$] {};
	\vertex (18) at (8,1) [fill, scale=.75,label=right:$u_4$] {};
	
	\path
	
	(8) edge (9)
	(9) edge (10)
	(10) edge (11)
	(11) edge (12)
	(12) edge (13)
	(13) edge (14)
	(14) edge (15)
	(15) edge (16)
	(16) edge (8)
	(17) edge (8)
	(17) edge (10)
	(17) edge (15)
	(18) edge (11)
	(18) edge (13)
	(18) edge (15)

	;
\end{tikzpicture}
\end{center}
\caption{ The graph $G^*$}
\label{fig:G*}
\end{figure}

The following definition was made in~\cite{bgo-2021}.

\begin{definition} {\rm (\cite{bgo-2021})} \label{def:F}
Let $G^*$  be the graph in Figure~\ref{fig:G*} and let $\F$ be the union of the following six graph families.
\begin{enumerate}
\item $\F_{11} = \{G^*(1, 1, 1, 1, 1, n, n, 0, 0, 0, 0): n \ge 1\}$
\item $\F_{12} = \{G^*(1,1,1,0,1,n+1,n+1,1,0,0,0): n\ge 1\}$
\item $\F_{21} = \{G^*(1,1,1,n-r-s+1,1,r,n,s,0,0,0): n-1\ge r\ge 1, n-1\ge s\ge 1, n\ge r+s\}$
\item $\F_{22} = \{G^*(1,1,1,n-r-s,1,r+1,n+1,s+1,0,0,0): n-1\ge r\ge 1, n-1\ge s\ge 1, n\ge r+s\}$
\item $\F_3 = \{G^*(1,1,r+1,s+1,1,0,n-s,n-r,0,0,0): n-1\ge r\ge 1, n-1\ge s\ge 1\}$
\item $\F_4 = \{G^*(r+1,n+1,s+1,1,1,0,0,0,0,0,n-r-s): n-1\ge r\ge 1, n-1\ge s\ge 1, n\ge r+s\}$
\end{enumerate}
\end{definition}

By analyzing each of the six families of equimatchable graphs listed above, we determine all the  well-edge-dominated graphs in $\F$.

\begin{prop} \label{prop:Fnonwed}
If $G \in \F$ is well-edge-dominated, then $G = H^*$.
\end{prop}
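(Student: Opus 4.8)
The plan is to work through the six families $\F_{11}, \F_{12}, \F_{21}, \F_{22}, \F_3, \F_4$ one at a time, in each case exhibiting for the generic member two minimal edge dominating sets of different sizes — unless the member happens to be $H^*$. First I would observe that $H^*$ itself arises as a specific small instance in this list (one should check which parameter choice in which family yields $H^*$ — it looks like a degenerate case of $\F_3$ or $\F_4$ with the smallest admissible parameters, and this member must be verified directly to be well-edge-dominated by checking that every minimal edge dominating set has the common size $\gedge(H^*)$). The heart of the argument, then, is to show every other graph in $\F$ fails.

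For the non-$H^*$ members I would use two complementary tools. The cheap move is Lemma~\ref{lem:matchremove}: if some graph $G$ in a family contains a matching $M$ such that $G - N_e[M]$ is easily seen to be not well-edge-dominated (for instance it contains a $K_{r,s}$ with $2\le r<s$ by Lemma~\ref{lemma:nonwed}, or a $K_n$ with $n\ge 5$ by Lemma~\ref{lem:Kn}, or an induced star plus something forcing two minimal edge dominating sets of different sizes), then $G$ is not well-edge-dominated. Because the blow-up construction $H(m_1,\dots,m_k)$ tends to create large complete bipartite ``bundles'' between blown-up vertices that are adjacent in $G^*$, I expect that for most parameter ranges one can peel off a small matching and land on a forbidden complete bipartite graph. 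The more hands-on move, needed when the parameters are small and Lemma~\ref{lem:matchremove} does not immediately bite, is to directly write down a small minimal edge dominating set (a carefully chosen ``star-like'' set centered at a high-degree blown-up vertex, analogous to the set $D$ in the proof of Lemma~\ref{lem:Kn}) and compare its size with $\alpha'(G)$, invoking $\gedge(G)\le \Gedge(G)$ from the displayed inequality chain; if the two differ, $G$ is not well-edge-dominated.

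The organization I would use: handle $\F_{11}$ and $\F_{12}$ first since they are single-parameter families and the blow-up parameter $n$ (resp. $n+1$) on $u_6, u_7$ creates a $K_{n,n}$-type bundle that, together with the rest of $G^*$, should be exploitable — either directly for all $n\ge 1$ or via Lemma~\ref{lem:matchremove} for $n\ge 2$ with the $n=1$ case checked by hand (the $n=1$ instance of $\F_{11}$ is $G^*(1,1,1,1,1,1,1,0,0,0,0)$, a fixed $7$-vertex graph, and likewise for $\F_{12}$, so these finitely many small graphs just need a finite verification). Then I would treat the four two- and three-parameter families $\F_{21}, \F_{22}, \F_3, \F_4$: in each, because $n-1\ge r\ge 1$ and $n-1\ge s\ge 1$ force $n\ge 2$, at least one blow-up multiplicity is $\ge 2$ and one is large, so I would find an independent pair of edges $M$ inside the blown-up structure with $G-N_e[M]$ containing either an unbalanced complete bipartite graph or two minimal edge dominating sets of differing size, again with a short finite check for the handful of extremal-parameter graphs (including the one that is $H^*$).

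The main obstacle I anticipate is \textbf{bookkeeping}, not conceptual difficulty: each family is a blow-up of the same $11$-vertex graph $G^*$ along different coordinates, so one must keep straight which original adjacencies of $G^*$ survive as complete bipartite bundles and which blown-up vertex sets are nonempty for a given parameter choice, and then exhibit the matching $M$ or the bad pair of minimal edge dominating sets explicitly in those coordinates. A secondary subtlety is making sure the exhibited edge sets really are \emph{minimal} edge dominating sets (every edge has a private edge neighbor) and not merely edge dominating sets — this is exactly the kind of check done for $D$ in Lemma~\ref{lem:Kn}, and it has to be redone, lightly, in each family. Finally, one must be careful at the boundary parameter values where a blow-up multiplicity drops to $0$, since the graph can degenerate (e.g. to $C_7$, which is already known to be well-edge-dominated, or to $H^*$); I would list these degenerate cases up front and dispose of them by citing Theorem~\ref{thm:K3-freeWED}'s target set or by direct verification, so that the family-by-family argument only needs to handle the truly generic members.
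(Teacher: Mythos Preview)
Your plan is essentially the paper's own proof: go family by family, strip off a small matching via Lemma~\ref{lem:matchremove} so that an unbalanced complete bipartite component appears (then cite Lemma~\ref{lemma:nonwed}), and for the handful of small-parameter graphs that slip through, exhibit two minimal edge dominating sets of different sizes directly. One correction to your bookkeeping: $H^*$ is the $n=1$ member of $\F_{11}$, not of $\F_3$ or $\F_4$ --- the graph $G^*(1,1,1,1,1,1,1,0,0,0,0)$ you flagged for finite verification \emph{is} $H^*$, so that check resolves to ``well-edge-dominated'' rather than ``fails,'' and the minimal admissible members of $\F_3$ and $\F_4$ (which have $n\ge 2$) are genuinely not well-edge-dominated.
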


\begin{proof}
Throughout this proof when considering a graph from one of these six families of graphs we will always assume the variables (that is, whichever of
$n,r$ and $s$ are used) satisfy the conditions in Definition~\ref{def:F} for that particular family.

First, let  $G=G^*(1, 1, 1, 1, 1, n, n, 0, 0, 0, 0) \in \F_{11}$. Note first that if $n=1$, then $G=H^*$  depicted in Figure~\ref{fig:triangle-wed}.
It is straightforward to show that $H^*$ is well-edge-dominated.  Suppose $n \ge 3$ and let $\{x_1, \dots, x_n\}$ be the set of vertices that
replace $u_6$ and let $\{y_1, \dots, y_n\}$
be the set of vertices that replace $u_7$. Note that $K_{n-1, n}$ is a component of   $G-N_e[\{x_1u_5, u_3u_4\}]$.
By Lemma~\ref{lemma:nonwed}, we infer that $G$ is not well-edge-dominated.   Therefore, we shall assume $n=2$.  Now, $\{u_1u_2,u_3u_4\}$ is a matching,
and $K_{2,3}$ is a component of $G-N_e[\{u_1u_2,u_3u_4\}]$. By Lemma~\ref{lem:matchremove} and Lemma~\ref{lemma:nonwed}, it follows that $G$ is not well-edge-dominated.

Next, let $G = G^*(1,1,1,0,1,n+1,n+1,1,0,0,0) \in \F_{12}$.  Let $\{x_1, \dots, x_{n+1}\}$ be the set of vertices that replace $u_6$ and
let $\{y_1, \dots, y_{n+1}\}$ be the set of vertices that replace $u_7$. Suppose first that $n \ge 2$.   Note that $K_{n, n+1}$ is a component of $G-N_e[\{x_1u_5, u_3u_8\}]$.  Since
$K_{n,n+1}$ is not well-edge-dominated by Lemma~\ref{lemma:nonwed}, it follows from Lemma~\ref{lem:matchremove} that $G$ is not well-edge-dominated. Therefore, we shall assume $n=1$. In this case, both $\{x_1y_1, x_2y_2, u_1u_5, u_3u_8\}$ and $\{x_1y_1, x_2y_1, u_8y_1, u_1u_5, u_2u_3\}$ are both minimal edge dominating sets, and hence $G$ is not well-edge-dominated.

Next, let $G \in \F_{21} \cup \F_{22} \cup \F_4$. Note that $n \ge 2$ for every such $G$.  Suppose $G = G^*(1,1,1,n-r-s+1,1,r,n,s,0,0,0) \in \F_{21}$. Note that $G - N_e[\{u_2u_3, u_1u_5\}]=K_{n, n+1}$. If $G = G^*(1,1,1,n-r-s,1,r+1,n+1,s+1,0,0,0) \in \F_{22}$, then $G - N_e[\{u_1u_5,u_2u_3\}] = K_{n+1, n+2}$. If
$G = G^*(r+1,n+1,s+1,1,1,0,0,0,0,0,n-r-s) \in \F_4$, then $G - N_e[\{u_4u_5\}]=K_{n+1, n+2}$. Therefore, for every $G \in \F_{21} \cup \F_{22} \cup F_4$,
we see by Lemmas~\ref{lem:matchremove} and~\ref{lemma:nonwed} that  $G$ is not well-edge-dominated.

Finally, assume $G \in \F_3$ and $G = G^*(1,1,r+1,s+1,1,0,n-s,n-r,0,0,0)$. Let $\{x_1,\ldots,x_{s+1}\}$ be the set of vertices that replace $u_4$.  The complete bipartite
graph $K_{n-s+r+1,n-r+s}$ is a component of $G-N_e[\{u_1u_2,u_5x_1\}]$.  Observe that $n-s+r+1 \neq n-r+s$ for otherwise $2r+1=2s$, which is not possible.  Furthermore, using
the conditions $n-1\ge r\ge 1$  and  $n-1\ge s\ge 1$ we see that $n-s+r+1 \ge 3$ and $n-r+s \ge 2$.  It follows by Lemma~\ref{lemma:nonwed} that $G$ is not well-edge-dominated.
\end{proof}

\begin{definition} {\rm (\cite{bgo-2021})} \label{def:G}
Let $G^*$  be the graph in Figure~\ref{fig:G*} and let $\G$ be the union of the following seven graph families.
\begin{enumerate}
\item $\G_{11} = \{G^*(m+1,m+1,1,0,1,1,n+1,n+1,0,0,0): n\ge 1, m\ge 1\}$
\item $\G_{12} = \{G^*(m+1,m+1,1,n-r-s,1,r+1,n+1,s+1,0,0,0): m \ge 1, n-1 \ge r \ge 1, n-1 \ge s \ge 1, n \ge r+s\}$
\item $\G_{21} = \{G^*(1,1,1,n-r-s+1,1,r,n,s,0,m,m):  m\ge 1, n-1\ge r\ge 1, n-1\ge s\ge 1, n \ge r+s\}$
\item$\G_{22} = \{G^*(1,1,r+1,s+1,1,0,n-s,n-r,0,m,m):  m\ge 1, n-1\ge r\ge 1, n-1\ge s\ge 1\}$
\item$\G_{23} = \{G^*(r+1,n+1,s+1,1,1,m,m,0,0,0,n-r-s):  m\ge 1, n-1 \ge r \ge 1, n-1 \ge s \ge 1, n \ge r+s\}$
\item $\G_{31} = \{G^*(m-k-\ell+1,1,1,n-r-s+1,1,r,n,s,\ell,m,k): n-1 \ge r \ge 1, n-1 \ge s \ge 1, n \ge r+s, m-1 \ge \ell \ge 1, m-1 \ge k \ge 1,
               m \ge k+\ell\}$
\item $\G_{32} = \{G^*(k+1,\ell +1,1,n-r-s+1,1,r,n,s,0,m-\ell,m-k):  n-1 \ge r \ge 1, n-1 \ge s \ge 1, n \ge r+s, m-1 \ge \ell \ge 1, m-1 \ge k \ge 1,
               m \ge k+\ell\}$
\end{enumerate}
\end{definition}

As we did in Proposition~\ref{prop:Fnonwed}, an analysis of all the graphs in $\G$ will show that no such graph is well-edge-dominated.

\begin{prop} \label{prop:Gnonwed}
If $G \in \G$, then $G$ is not well-edge-dominated.
\end{prop}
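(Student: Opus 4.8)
The plan is to mimic the proof of Proposition~\ref{prop:Fnonwed}: for each of the seven families $\G_{11}, \G_{12}, \G_{21}, \G_{22}, \G_{23}, \G_{31}, \G_{32}$, exhibit a small matching $M$ in $G$ such that $G - N_e[M]$ contains a component isomorphic to some $K_{r,s}$ with $2 \le r < s$, and then invoke Lemma~\ref{lem:matchremove} together with Lemma~\ref{lemma:nonwed} to conclude $G$ is not well-edge-dominated. The key observation is that each family in $\G$ is obtained from the corresponding family in $\F$ (or a close variant) by ``inflating'' one or two additional vertices of $G^*$ to independent sets of size $m$ (or $k$ and $\ell$), so the same edges $u_2u_3$, $u_1u_5$, $u_4u_5$, $u_3u_8$ that worked in Proposition~\ref{prop:Fnonwed} should again isolate a complete bipartite component after removing their closed edge neighborhoods; one simply recomputes the two partite-set sizes in terms of $n,r,s$ (and, where relevant, $m,k,\ell$) and checks they are distinct and both at least $2$.

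Concretely, I would first handle $\G_{11}$: with $\{x_1,\dots,x_{n+1}\}$ replacing $u_7$ and $\{y_1,\dots,y_{n+1}\}$ replacing $u_8$, removing $N_e$ of a matching like $\{u_1u_5, u_3 x_1\}$ (the analogue of the $\F_{12}$ move, adjusted for the labeling) should leave a $K_{n, n+1}$-component when $n \ge 1$, and for the degenerate small value one checks two explicit minimal edge dominating sets of different sizes as was done for $\F_{12}$ with $n=1$. For $\G_{12}$, $\G_{21}$, $\G_{22}$, $\G_{23}$ I expect the matchings $\{u_1u_5, u_2u_3\}$ or $\{u_4u_5\}$ used in the $\F_{22}, \F_{21}, \F_3, \F_4$ cases to carry over verbatim; since $\G_{12}$ inflates $u_1,u_2$ of the $\F_{22}$-graph and $\G_{23}$ inflates $u_6,u_7$ of the $\F_4$-graph, and those vertices are already covered by the removed edge-neighborhoods, the surviving component is still the same unbalanced complete bipartite graph $K_{n+1,n+2}$ (or $K_{n,n+1}$, etc.), and the inequalities $n-1 \ge r,s \ge 1$ guarantee the sizes are $\ge 2$ and unequal. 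For $\G_{22}$ I would reuse the $\F_3$ computation, noting again $n-s+r+1 \ne n-r+s$ since $2r+1 = 2s$ is impossible.

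The genuinely new work is $\G_{31}$ and $\G_{32}$, which carry both an $(n,r,s)$-block and an $(m,k,\ell)$-block of inflations; here I would pick a matching that kills the $u_4$–$u_8$ side (using edges among $u_1,u_2,u_3,u_5$ and possibly one inflated vertex) so that the remaining component is the complete bipartite graph on the inflated copies of $u_9$ (size $\ell$ or $m-\ell$) versus the inflated copies of $u_{10}, u_{11}$ (sizes adding up in terms of $m, k$), and then check from $m - 1 \ge k, \ell \ge 1$ and $m \ge k + \ell$ that the two sides have distinct sizes both at least $2$. The main obstacle is purely bookkeeping: correctly reading off, from Figure~\ref{fig:G*}, which vertices of $G^*$ are adjacent to which, so that after deleting the closed edge neighborhood of the chosen matching the claimed complete bipartite subgraph is exactly a connected component (no stray edges, no extra isolated vertices affecting the count) — there is no conceptual difficulty, only the risk of an off-by-one in a partite-set size, so I would verify each of the seven size computations against the constraints in Definition~\ref{def:G} before concluding, in every case, that $G$ is not well-edge-dominated by Lemmas~\ref{lem:matchremove} and~\ref{lemma:nonwed}.
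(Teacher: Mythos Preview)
Your overall strategy is exactly the paper's: for each of the seven families exhibit a small matching $M$ so that $G-N_e[M]$ has an unbalanced complete bipartite component, then invoke Lemmas~\ref{lem:matchremove} and~\ref{lemma:nonwed}. Two of your specific choices, however, would fail as stated. For $\G_{11}$ (and $\G_{12}$) the vertices $u_1$ and $u_2$ are themselves inflated to $m{+}1$ copies, so ``$u_1u_5$'' is not a single edge; moreover $u_3$ is \emph{not} adjacent to $u_7$ in $G^*$, so your edge $u_3x_1$ with $x_1$ a copy of $u_7$ does not exist. The paper instead lets $x_1$ be a copy of $u_2$ and $y_1$ a copy of $u_1$ and uses $M=\{x_1u_3,\,y_1u_5\}$, which yields a $K_{n+1,n+2}$ component uniformly for all $n\ge 1$---no degenerate subcase is needed. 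For $\G_{31}$ and $\G_{32}$ your description of the $(m,k,\ell)$ block is off (in $G^*$, $u_9$ is adjacent only to $u_5$ and $u_{10}$, while $u_1$ is also adjacent to $u_{10}$), and in any event there is no need to analyze that block at all: since $n\ge 2$ in both families, the paper simply picks one copy of the inflated $u_1$ (resp.\ one copy each of the inflated $u_1,u_2$) in the matching $\{x_1u_5,u_2u_3\}$ (resp.\ $\{x_1u_3,y_1u_5\}$), and the $(n,r,s)$ block already produces a $K_{n,n+1}$ component, while the $(m,k,\ell)$ block collapses to a harmless $K_{m,m}$.

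One place your plan is actually cleaner than the paper: for $\G_{22}$, reusing the $\F_3$ matching $\{u_1u_2,\,u_5x_1\}$ (with $x_1$ a copy of $u_4$) gives partite sizes $n{+}r{-}s{+}1$ and $n{-}r{+}s$, which differ by parity, so no case split is needed; the paper instead uses $\{x_1u_2,\,u_1u_5\}$ with $x_1$ a copy of $u_{11}$, obtains sizes $n{-}r{+}s{+}1$ and $n{-}s{+}r{+}1$, and must handle separately the case $r=s$ where these coincide.
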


\begin{proof}
Throughout this proof when considering a graph from one of these seven families of graphs we will always assume the variables (that is, whichever of
$n,m,r,s,k$ and $\ell$ are used) satisfy the conditions in Definition~\ref{def:G} for that particular family.

First, suppose $G \in \G_{11} \cup \G_{12}$.   Let $\{x_1, \dots, x_{m+1}\}$ be the set of vertices that replace $u_2$ and let $\{y_1, \dots, y_{m+1}\}$
be the set of vertices that replace $u_1$.  If $G=G^*(m+1,m+1,1,0,1,1,n+1,n+1,0,0,0)\in \G_{11}$, then
$K_{n+1,n+2}$ is a component of $G-N_e[\{x_1u_3,y_1u_5\}]$.  If $G=G^*(m+1,m+1,1,n-r-s,1,r+1,n+1,s+1,0,0,0)\in \G_{12}$, then $K_{n+1,n+2}$ is a component of $G-N_e[\{x_1u_3,y_1u_5\}]$.  Since $n+1\ge 2$, it follows from Lemmas~\ref{lem:matchremove} and~\ref{lemma:nonwed} in both cases that $G$ is not well-edge-dominated.

Next, suppose $G = G^*(1,1,1,n-r-s+1,1,r,n,s,0,m,m) \in \G_{21}$. Note that this implies $n\ge 2$ and  $G- N_e[\{u_1u_5, u_2u_3\}]$
contains the component $K_{n, n+1}$.  By Lemmas~\ref{lem:matchremove} and~\ref{lemma:nonwed} we infer that $G$ is not well-edge-dominated.

Next, suppose $G = G^*(1,1,r+1,s+1,1,0,n-s,n-r,0,m,m) \in \G_{22}$. Let $\{x_1, \ldots, x_m\}$ be the set of vertices that replace
$u_{11}$ and let $\{y_1, \ldots, y_{r+1}\}$ be the set of vertices that replace $u_3$.  The complete bipartite graph $K_{n-r+s+1,n-s+r+1}$ is a component of $G-N_e[\{x_1u_2,u_1u_5\}]$.  Note that $n-r+s+1 \ge s+2\ge 3$ and $n-s+r+1 \ge r+2 \ge 3$.
If $n-r+s+1 \neq n-s+r+1$, then $K_{n-r+s+1,n-s+r+1}$ is not well-edge-dominated by Lemma~\ref{lemma:nonwed}.  On the other hand, if $n-r+s+1 = n-s+r+1$, then
$G-N_e[\{u_2y_1,u_1u_5\}]$ has a component isomorphic to $K_{n-r+s+1,n-s+r}$, which is not well-edge-dominated.  Again by Lemmas~\ref{lem:matchremove} and~\ref{lemma:nonwed}
we conclude that $G$ is not well-edge-dominated.

Next, suppose $G = G^*(r+1,n+1,s+1,1,1,m,m,0,0,0,n-r-s)\in \G_{23}$.  The graph $K_{n+1,n+2}$ is a component of $G-N_e[u_4u_5]$.  Using Lemmas~\ref{lem:matchremove} and~\ref{lemma:nonwed}  we infer that $G$ is not well-edge-dominated.

Next, suppose $G = G^*(m-k-\ell+1,1,1,n-r-s+1,1,r,n,s,\ell,m,k) \in \G_{31}$. Let $\{x_1, \ldots, x_{m-k-\ell+1}\}$ be the set of vertices that
replace $u_1$.  Note that $n \ge 2$ and that $K_{n,n+1}$ is a component of $G-N_e[\{x_1u_5,u_2u_3\}]$.  By Lemmas~\ref{lem:matchremove} and~\ref{lemma:nonwed},
this implies that $G$ is not well-edge-dominated.

Finally, suppose $G = G^*(k+1,\ell +1,1,n-r-s+1,1,r,n,s,0,m-\ell,m-k) \in \G_{32}$.  Note that $n \ge 2$.  Let $\{x_1, \dots, x_{\ell+1}\}$ be the set of vertices
that replace $u_2$ and let $\{y_1, \dots, y_{k+1}\}$ be the set of vertices that replace $u_1$.  Since $K_{n,n+1}$ is a component of $G-N_e[\{x_1u_3,y_1u_5\}]$,
we conclude by Lemmas~\ref{lem:matchremove} and~\ref{lemma:nonwed} that $G$ is not well-edge-dominated.

\end{proof}

\vskip5mm

\noindent \textbf{Theorem~\ref{thm:K3-freeWED}} \emph{
If $G$ is a connected, nonbipartite, well-edge-dominated graph of girth at least $4$, then $G \in \{C_5, C_7, H^*\}$.
}

\begin{proof}
It is straightforward to check that every graph in $\F \cup \G$ is connected, has girth $4$ but is not bipartite.  If we consider only nonbipartite graphs, then the
main result of B\"{u}y\"{u}k\c{c}olak, et.~al~\cite[Theorem 36]{bgo-2021} states that a graph $G$ is a connected, nonbipartite, triangle-free equimatchable graph if
and only if $G \in \F \cup \G \cup \{C_5,C_7\}$.
Applying Proposition~\ref{prop:Fnonwed} and Proposition~\ref{prop:Gnonwed} completes the proof.
\end{proof}

\section{Split graphs} \label{sec:SplitWED}

Recall that a graph is a split graph if its vertex set can be partitioned into an independent set and a set that induces a complete graph.  In this section we
prove a complete characterization of the family of split graphs that are well-edge-dominated.  We will use the following definitions.
Let $\cH_1$ be the family of graphs obtained by appending any finite number of leaves to a single vertex of $K_4$ and let $\cH_2$ be the family of  graphs obtained from $K_4$ by removing any edge $uv$  and appending  at least one leaf to $u$.  Let $H_3$ be the graph of order $5$ obtained from $K_4-e$ by adding a new vertex adjacent to one of the vertices of degree $2$ and one of the vertices of degree $3$.
\begin{lemma} \label{lem:WEDsplitgraphs}
If $G\in \{K_2, K_3, K_4, H_3\} \cup \cH_1 \cup \cH_2 \cup \{K_{1,n} : n \in \N\}$, then $G$ is well-edge-dominated.
\end{lemma}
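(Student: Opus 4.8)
The plan is to verify well-edge-domination for each graph in the list directly, handling the easy cases quickly and then focusing on the two infinite families $\cH_1$ and $\cH_2$, where a uniform argument is needed. For the finite graphs $K_2$, $K_3$, $K_4$, $H_3$ and each star $K_{1,n}$, I would simply exhibit that all minimal edge dominating sets have the same size: for $K_2$ and $K_3$ every minimal edge dominating set is a single edge; for the star every minimal edge dominating set is a single edge incident to the center; $K_4$ was already handled in Lemma~\ref{lem:Kn}; and $H_3$ has few enough edges that one can enumerate its minimal edge dominating sets by hand and check they all have size $2$.

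The core of the argument is the two families. Write $G\in\cH_1$ as $K_4$ with vertex set $\{a,b,c,d\}$ together with leaves $\ell_1,\dots,\ell_k$ all attached to $a$. I claim $\gedge(G)=\Gedge(G)=2$. First, $\{ab, cd\}$ is an edge dominating set of size $2$: $ab$ dominates every edge incident to $a$ (hence all the pendant edges) and every edge incident to $b$, while $cd$ dominates the remaining edge $ac$ (wait, $ac$ is incident to $a$, so it is already dominated), and in fact $\{ab,cd\}$ covers all four $K_4$-vertices and so dominates every edge of the $K_4$ part; together they dominate everything, so $\gedge(G)\le 2$. For the upper bound, let $F$ be any minimal edge dominating set. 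If $k\ge 1$ then the pendant edge $a\ell_1$ must be dominated, so $F$ contains an edge incident to $a$. Any single edge incident to $a$ already dominates all pendant edges and all edges of the $K_4$ except possibly those in the triangle $bcd$ not meeting $a$; one more edge among $\{bc,bd,cd\}$ finishes the job, so $F$ has an edge dominating set of size $\le 2$ contained in it, and minimality forces $|F|\le 2$; one checks $|F|\ge 2$ because no single edge dominates all of $E(G)$ when $G$ has two independent edges far apart (e.g. a pendant edge and an edge of the triangle on $\{b,c,d\}$). Hence $|F|=2$ for every minimal edge dominating set. The case $\cH_2$ is analogous: $G$ is $K_4-uv$ with $\ell_1,\dots,\ell_k$ ($k\ge1$) attached to $u$; writing the degree-$3$ vertices as $w,x$ and recalling $u,v$ have degree $2$ in $K_4-e$, one shows $\{uw, vx\}$ (or a similar two-edge set) is edge dominating, and that any minimal edge dominating set must include an edge at $u$ (to dominate $u\ell_1$) plus at most one further edge to dominate the edge $wx$ and the edges at $v$, again giving size exactly $2$.

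The main obstacle, such as it is, is bookkeeping rather than depth: I must be careful that the two-edge dominating sets I name actually dominate \emph{every} edge, including the pendant edges and the edges of the complete-graph part not incident to the chosen endpoints, and I must argue the matching lower bound $\gedge(G)\ge 2$ cleanly (this follows since each of these graphs contains two independent edges no one of which dominates the other together with all edges — most simply, a pendant edge and an edge disjoint from it). A convenient uniform phrasing is: in every graph $G$ on this list, every minimal edge dominating set has cardinality equal to $1$ if $G$ is $K_2$, $K_3$, or a star, and cardinality $2$ otherwise; since this common value is forced, $G$ is well-edge-dominated. I would organize the write-up as: (i) stars and small complete graphs (one line each, or cite Lemma~\ref{lem:Kn}); (ii) $H_3$ by inspection; (iii) the family $\cH_1$; (iv) the family $\cH_2$, noting the symmetry with (iii). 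No step requires anything beyond the definitions and the already-proved Lemma~\ref{lem:Kn}.
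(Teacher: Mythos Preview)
Your plan matches the paper's proof: dispose of $K_2,K_3,K_4$ via Lemma~\ref{lem:Kn}, handle stars and $H_3$ by direct inspection, and then show that every minimal edge dominating set of a graph in $\cH_1$ (and, analogously, $\cH_2$) has cardinality exactly~$2$.

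One step in your $\cH_1$ sketch does not go through as written. From ``$F$ contains an edge $e_1$ incident to $a$'' and ``one more edge among $\{bc,bd,cd\}$ finishes the job'' you jump to ``$F$ has an edge dominating set of size $\le 2$ contained in it.'' But nothing yet forces $F$ to contain an edge of $\{bc,bd,cd\}$: if $e_1=a\ell_1$ is a pendant edge, $F$ might instead contain $ab$, and $\{a\ell_1,ab\}$ does \emph{not} dominate $cd$, so you cannot yet conclude $|F|\le 2$. The missing observation---which the paper states explicitly---is that if $F$ contains a pendant edge $a\ell$, then by minimality this is the \emph{only} edge of $F$ incident with $a$ (any other edge at $a$ already dominates everything $a\ell$ dominates, so $a\ell$ would be redundant). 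With that in hand, $F-\{a\ell\}$ must lie inside the triangle on $\{b,c,d\}$, and since any one such edge dominates the other two, minimality gives $|F|=2$. The case where $F$ contains no pendant edge is then the easy one: some $ab$, $ac$, or $ad$ lies in $F$, leaving a single undominated edge. Once you insert this case split your argument is complete and coincides with the paper's; the $\cH_2$ analysis is genuinely analogous.
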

\begin{proof}
By Lemma~\ref{lem:Kn}, $K_2$, $K_3$, and $K_4$ are well-edge-dominated. It is easy to see that every minimal edge dominating set of a nontrivial star $K_{1,n}$
consists of exactly one edge.  Therefore, $K_{1,n}$ is well-edge-dominated.  It is straightforward to check that all minimal edge dominating sets of $H_3$ have
cardinality $2$.

Next, assume $G \in \cH_1$. Suppose the vertices $v_1,v_2,v_3$ and $v_4$ of $G$ induce a complete graph and $v_1$ is the support vertex. Let $D$ be a minimal edge dominating set of $G$. First assume that $D$ contains an edge, say $v_1w$, where $w$ is a leaf. Note that $D$ cannot contain more than one edge incident with $v_1$ since $D$ is minimal. The only edges not dominated by $v_1w$ are $v_2v_3, v_2v_4$ and $v_3v_4$. Exactly one of those edges must be in $D$ in order for it to be a minimal edge dominating set. Thus, $|D| = 2$. Next, assume $D$ does not contain an edge incident to a leaf. Then $D \cap \{v_1v_2, v_1v_3, v_1v_4\} \neq \emptyset$. Without loss of generality, assume $v_1v_2 \in D$. The only edge of $G$ not dominated by $v_1v_2$ is $v_3v_4$, so by minimality $|D|=2$ and  $G$ is well-edge-dominated.

Now, assume $G \in \cH_2$. Label the vertices of the $K_4$ as $v_1, v_2, v_3$ and $v_4$, remove the edge $v_1v_3$, and append leaves to vertex $v_1$. Let $D$ be a minimal edge dominating set of $G$. Using a similar argument to the one above we conclude that $G$ is well-edge-dominated.

\end{proof}

\begin{theorem}\label{thm:split}
A nontrivial, connected split graph $G$ is well-edge-dominated if and only if $G \in \{K_2, K_3, K_4, H_3\} \cup \cH_1 \cup \cH_2 \cup \{K_{1,n} : n \in \N\}$.
\end{theorem}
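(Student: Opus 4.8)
The plan is to prove the two directions separately, with the forward (necessity) direction being the substantial one. For the easy direction, the reverse implication is exactly Lemma~\ref{lem:WEDsplitgraphs}, so nothing remains there. For the forward direction, assume $G$ is a nontrivial, connected split graph that is well-edge-dominated, and fix a partition $V(G) = C \cup I$ where $C$ induces a complete graph, $I$ is independent, and (by a standard convention) $C$ is chosen of maximum size. First I would dispose of the small cases: if $|C| \le 1$ then $G$ is a star (since $I$ is independent and $G$ is connected), giving $G = K_{1,n}$; and since well-edge-dominated graphs are equimatchable and contain no large cliques, Lemma~\ref{lem:Kn} forces $|C| \le 4$. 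So the work is in the cases $|C| \in \{2,3,4\}$, and within each case one must control how vertices of $I$ attach to $C$.

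The key tool throughout will be Lemma~\ref{lem:matchremove}: for any matching $M$ in $G$, the graph $G - N_e[M]$ is again well-edge-dominated, and in a split graph carefully chosen single edges or pairs of edges inside $C$ tend to leave behind small, easily analyzed pieces (often stars, copies of $K_{r,s}$, or small graphs to which Lemma~\ref{lemma:nonwed} or Lemma~\ref{lem:Kn} applies). The heart of the argument is a sequence of structural restrictions. First, one should show that at most one vertex of $C$ can have a neighbor in $I$ that forces trouble — concretely, that no vertex of $I$ can have two neighbors in $C$ unless $|C| = 4$ and the configuration is very restricted, and that two \emph{distinct} vertices of $C$ cannot each carry a pendant-type neighbor in $I$ simultaneously (otherwise removing a matching edge between two other vertices of $C$, or between two such pendant edges, exhibits two minimal edge dominating sets of different sizes, as in the proof of Lemma~\ref{lemma:nonwed}). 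This should collapse the possibilities to: $G = K_{|C|}$; $G$ has exactly one vertex $v \in C$ receiving attachments from $I$; or the exceptional $|C|=4$ configurations. When $|C| = 2$, the "one attachment vertex" case is just a star. When $|C|=3$, attaching any leaves to one vertex of the triangle already destroys well-edge-domination (a short computation producing minimal edge dominating sets of sizes $1$ and $2$), so $G = K_3$. When $|C| = 4$, the attachments at the single vertex $v$ are all leaves — giving a member of $\cH_1$ — unless some vertex of $I$ has a second neighbor in $C$, and then one checks that the only surviving possibilities are $H_3$ and the graphs in $\cH_2$ (where one edge of the $K_4$ is effectively absent because the maximality of $C$ is violated, i.e.\ that missing edge's endpoint pair is "used up" by a common $I$-neighbor).

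In carrying this out I would repeatedly argue by contradiction: suppose a forbidden configuration occurs, name the relevant vertices, exhibit one minimal edge dominating set built from a matching (hence of size $\alpha'$ or smaller) and a second minimal edge dominating set that is strictly larger (typically built around a single vertex of $C$ covering most of the clique, plus private edge neighbors forced in by pendant edges), and conclude $\gedge(G) < \Gedge(G)$, contradicting well-edge-domination. Alternatively, for configurations containing a large clique or a lopsided complete bipartite subgraph as an induced component after removing $N_e[M]$, invoke Lemma~\ref{lem:Kn} or Lemma~\ref{lemma:nonwed} directly. The main obstacle, I expect, is the $|C| = 4$ analysis: one must enumerate, up to symmetry, how the independent set can attach to a $K_4$ (or to a $K_4$ with the maximality constraint making one nonadjacency "virtual"), and for each attachment pattern either verify it lands in $\cH_1 \cup \cH_2 \cup \{K_4, H_3\}$ or kill it via a removed-matching argument; keeping the case division exhaustive but non-redundant, and in particular correctly identifying precisely when the $\cH_2$-type graphs arise versus when extra attachments make them fail, is the delicate bookkeeping. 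The other potential pitfall is ensuring that the maximum-size choice of $C$ is used consistently, since split-graph partitions are not unique and several of the target graphs (notably those in $\cH_2$) are split with respect to more than one partition.
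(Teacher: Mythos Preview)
Your plan has two genuine gaps.

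First, the step ``Lemma~\ref{lem:Kn} forces $|C|\le 4$'' is not justified. Lemma~\ref{lem:Kn} says only that the graph $K_n$ itself fails to be well-edge-dominated for $n\ge 5$; it says nothing about graphs that merely contain a $K_5$. Nor does Lemma~\ref{lem:matchremove} help directly: any matching $M$ you remove that touches the $C$--$I$ edges will also strip edges from inside $C$, so you cannot isolate a clean $K_5$ component. In the paper's proof the bound on $|C|$ is not an input but an \emph{output}: for each attachment pattern one writes down two explicit minimal edge dominating sets---typically a maximal matching of size about $\lfloor k/2\rfloor$ and a star-shaped set $\{x_1x_2,\ldots,x_1x_{k-1}\}$ (suitably adjusted) of size about $k-2$---and the equality of their sizes forces $k\in\{3,4\}$. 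You would need to replicate that work, and once you do, the upfront bound is no longer a shortcut.

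Second, your $|C|=3$ case is wrong. Both $H_3$ and every graph in $\cH_2$ have maximum clique size~$3$ (in $\cH_2$ one edge of the $K_4$ has been deleted, so the largest clique is a triangle), so with your convention of choosing $C$ maximum they must surface in the $|C|=3$ analysis, not in $|C|=4$. Concretely, take $C=\{v_2,v_3,v_4\}$ in a graph from $\cH_2$: then $v_1\in I$ has two neighbours in $C$. This also refutes your intermediate claim that ``no vertex of $I$ can have two neighbours in $C$ unless $|C|=4$''. The sentence ``when $|C|=3$, attaching any leaves to one vertex of the triangle already destroys well-edge-domination, so $G=K_3$'' is therefore false as a conclusion for this case: leaf-only attachments do fail, but degree-$2$ attachments from $I$ are exactly how $H_3$ and $\cH_2$ arise.

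The paper avoids both problems by not fixing $C$ to be maximum and not bounding $k$ in advance; instead it branches on $|I|$ and on the set $J=\{x\in K: N(x)\cap I\neq\emptyset\}$, and in each branch compares an explicit matching-type and an explicit star-type minimal edge dominating set to pin down $k$.
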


\begin{proof}
By Lemma~\ref{lem:WEDsplitgraphs}, each graph in $\{K_2, K_3, K_4, H_3\} \cup \cH_1 \cup \cH_2 \cup \{K_{1,n} : n \in \N\}$ is well-edge-dominated and is a split
graph by definition.

For the converse let $G$ be a connected, well-edge-dominated split graph.  We let $V(G) = K\cup I$ where $I$ is an independent set, $K=\{x_1,\ldots,x_k\}$, and $G[K]$ is a clique.
If $k = 1$, then $G = K_{1, n}$ where $n=|I|$. So we may assume $k \ge 2$. Suppose first that $I = \emptyset$. Thus, $G = K_k$, and $G \in \{K_2, K_3, K_4\}$ by Lemma~\ref{lem:Kn}. Therefore, we shall assume $I \ne \emptyset$. Assume first that $I = \{u\}$. If $u$ is adjacent to every vertex in $K$, then $G$ is clique.
So we shall assume $N(u) = \{x_1, \ldots, x_r\}$ where $r<k$.
Let $M = \{x_1x_2, x_3x_4, \dots, x_ix_{i+1}\}$ where $i+1= k$ if $k$ even or $i+1 = k-1$ if $k$ odd. We see that $M$ is a maximal matching in $G$ of size
$\left\lfloor \frac{k}{2}\right\rfloor$ and therefore a minimal edge dominating set. Let $M' = \{x_1x_2, x_1x_3, \dots, x_1x_{k-1}\}$. Note that $M'$ is
a minimal edge dominating set since $M' - \{x_1x_j\}$ for $2 \le j \le k-1$ does not dominate $x_jx_k$. However, $|M'| = k-2 \neq \left\lfloor \frac{k}{2}\right\rfloor = |M|$ when $k=2$ or $k \ge 5$. If $k=3$, then $r \in \{1, 2\}$.  In this case both $\{ux_1, x_2x_3\}$ and $\{x_1x_2\}$ are maximal matchings in $G$, which implies that $G$ is not well-edge-dominated.   Suppose then that $k=4$. If $r=1$, then $G \in \cH_1$.  If $r=2$, then
$G$ is not well-edge-dominated since $\{ux_1,x_1x_3,x_1x_4\}$ and $\{ux_1,x_2x_4\}$ are minimal edge dominating sets.  If $r=3$, then
$\{x_1x_2, x_3x_4\}$ and $\{ux_1, ux_2, ux_3\}$ are minimal edge dominating sets so $G$ is not well-edge-dominated.

Thus, we assume for the remainder of the proof that $|I| \ge 2$. We let $J = \{x \in K : N(x) \cap I \ne \emptyset\}$. Suppose first that $k=2$. If $J \ne K$,
then $G = K_{1, n}$ where $n-1=|I|$. Therefore, we shall assume $J = K$.  Without loss of generality, we may assume $x_1w_1 \in E(G)$ for some $w_1 \in I$.
If there exists $w_2 \in I - \{w_1\}$ such that $x_2w_2 \in E(G)$, then both $\{x_1x_2\}$ and $\{x_1w_1, x_2w_2\}$ are maximal matchings in $G$ and $G$ is not well-edge-dominated. So we may assume $N(x_2) \cap I = \{w_1\}$.  Since $G$ is connected and $|I| \ge 2$, it follows that there exists $w_2 \in I - \{w_1\}$
adjacent to $x_1$.  Thus, $\{x_1x_2\}$ and $\{x_1w_2, x_2w_1\}$ are both maximal matchings, which implies that $G$ is not well-edge-dominated. Having considered
all cases for $k=2$, we now assume $k \ge 3$.

Suppose there exist distinct vertices $x$ and $y$ in $J$ and distinct vertices $w_1$ and $w_2$, such that $w_1 \in N(x)\cap I$ and $w_2 \in N(y) \cap I$.  If $k$ is
even, then extend $xy$ to a maximal matching $M$ in $G[K]$.  Note that $M$ is a maximal matching in $G$.   If $k$ is odd and $J \neq K$, then let $z\in K-J$ and extend
$xy$ to a maximal matching $M$ of $G[K]$ such that $z\notin S(M)$.  Again, $M$ is a maximal matching in $G$.  In both of these cases let $M'=(M-\{xy\}) \cup \{xw_1,yw_2\}$.
Since $M'$ is also a maximal matching, $G$ is not equimatchable and thus also not well-edge-dominated, which is a contradiction.

Therefore, we shall assume that $k$ is odd and $K=J$. If there exists a $z \in K - \{x, y\}$ such that $N(z) \cap I \not\subseteq \{w_1, w_2\}$, then
extend $xy$ to a maximal matching $M$ of $G[K]$ such that $z \notin S(M)$.   Let $u \in (N(z) \cap I) - \{w_1, w_2\}$ and let $M' = M \cup \{uz\}$. In this case,
both $M'$ and $M'' = (M' - \{xy\}) \cup \{xw_1, yw_2\}$ are maximal matchings in $G$.  Therefore, $G$ is not equimatchable, which is a contradiction. Therefore, we shall assume for all $z \in K - \{x, y\}$, $N(z) \cap I \subseteq \{w_1, w_2\}$.

Suppose in addition  that $I = \{w_1, w_2\}$. Reindexing if necessary, we may assume that $\deg_G(w_1) \ge \deg_G(w_2)$ and we may assume  the vertices of $K$
have been enumerated in such a way that $N(w_1) \cap K = \{x_1, \dots, x_{\ell}\}$. If $\ell = k$, then we could instead partition $V(G)$ as $K' \cup I'$ where $I' = \{w_2\}$ and $K' = K \cup \{w_1\}$ and $K'$ induces a clique in $G$. Having already considered this case above, we instead assume $\ell \le k-1$. If $\ell = k-1$, then $x_k w_2 \in E(G)$ since $K=J$. Let $M' = \{w_1x_1, \dots, w_1x_{k-2}, w_2x_k\}$. Note that $M'$ is edge dominating since all vertices other than $x_{k-1}$ are covered by $M'$.
Moreover, $M'$ is minimal since $w_2x_k$ is its own private neighbor and $M' - \{w_1x_i\}$ does not dominate $x_ix_{k-1}$ for each $i \in[k-2]$.
On the other hand, $M = \{x_1x_2, x_3x_4, \dots, x_{k-2}x_{k-1}, w_2x_k\}$ is a maximal matching and therefore is a minimal edge dominating set.
Thus, $\frac{k+1}{2} = |M| =  |M'| = k-1 $, or equivalently, $k=3$. Since $\deg_G(w_1)=2 \ge \deg_G(w_2)$, the vertex $w_2$ is adjacent to at most one of
$x_1$ or $x_2$. If $\deg_G(w_2)=2$, then $G=H_3$.  On the other hand, $G \in \cH_2$ if $\deg_G(w_2)=1$.

Hence, we shall assume $\ell < k-1$. Let $M' = \{x_1x_2, x_1x_3, \dots, x_1x_{k-2}, x_kw_2\}$. The only vertices that are not covered by $M'$ are $x_{k-1}$ and
$w_1$. It follows that $M'$ edge dominates $G$ since $x_{k-1}w_1 \not\in E(G)$. Moreover, $M'$ is a minimal edge dominating set since $x_kw_2$ is its own private neighbor and $M' - \{x_1x_j\}$ does not dominate $x_jx_{k-1}$, for each $2 \le j \le k-2$. Now, since $M = \{x_1x_2, x_3x_4, \dots, x_{k-2}x_{k-1}, w_2x_k\}$
is a maximal matching, we get  $\frac{k+1}{2} = |M| = |M'| = k-2$, or equivalently $k=5$.
Notice that $\deg_G(w_1) \le 3$ since we have assumed $\ell < k-1$. On the other hand, $\deg_G(w_1) \ge 3$, for otherwise $\deg_G(w_2)>\deg_G(w_1)$ since $K=J$.
Therefore, $\deg_G(w_1) = 3$. Furthermore, $x_4w_2, x_5w_2 \in E(G)$, and it is possible that $w_2$ is adjacent to exactly one of $x_1,x_2$ or $x_3$, which does not affect
the following argument.  The set $M' = \{w_1x_1,w_1x_2,w_1x_3,w_2x_4\}$ covers all vertices of $G$ other than $x_5$, so it is edge dominating.  Moreover, since
any proper subset of $M'$ does not dominate some edge of the form $x_ix_5$, $M'$ is a minimal edge dominating set and we have a contradiction since
 $|M| = 3 < 4 = |M'|$.  Therefore, for the remainder of the proof we shall assume $|I|>2$.

Note that we are assuming for all $z\in K - \{x, y\}$, $N(z) \cap I \subseteq \{w_1, w_2\}$, $k$ is odd, and $K=J$. Without loss of generality, we may assume
the vertices of $K$ are enumerated in such a way that $x=x_1$ and $y=x_2$; in particular,  $x_1w_1 \in E(G)$ and $x_2w_2 \in E(G)$.
Furthermore, we may assume $x_2$ has a neighbor $w_3 \in I - \{w_1, w_2\}$. Assume first there exists $t \in K- \{x_1, x_2\}$ such that $tw_2 \in E(G)$. Reindexing if necessary, we may write $t = x_k$.
Let $M = \{x_1x_2, x_3x_4, \dots, x_{k-2}x_{k-1}, x_kw_2\}$.  Both $M$ and $M' = \{x_1w_1, x_2w_3, x_3x_4, x_5x_6, \ldots, x_{k-2}x_{k-1},x_kw_2\}$ are
maximal matchings in $G$.  However, $|M|= \frac{k+1}{2} < 3 + \frac{k-3}{2}=|M'|$, which contradicts the assumption that $G$ is well-edge-dominated.
Therefore, no such vertex $t\in K$ exists; that is, $N(x_i) \cap I = \{w_1\}$, for all $3 \le i \le k$.
Moreover, a similar argument (by interchanging $x_1$ and $x_3$) may be used to show that $x_1w_2 \not\in E(G)$.  This implies that  each vertex of $I - \{w_1\}$
is a leaf adjacent to  $x_2$.  Now the set $M'' = \{x_1x_3, x_1x_4, \dots, x_1x_k, x_2w_3\}$  is an edge dominating set since all vertices of $K$ are covered.
Since $x_2w_3$ is its own private neighbor with respect to $M''$ and $M'' - \{x_1x_j\}$ does not dominate $w_1x_j$, for  $3\le j \le k$, it follows that
$M''$ is a minimal edge dominating set of $G$.  On the other hand, $M = \{x_1x_2, x_3x_4, \dots, x_{k-2}x_{k-1}, x_kw_1\}$ is a maximal matching in $G$.
Since $G$ is a well-edge-dominated graph, we infer that $\frac{k+1}{2} =|M| = |M''| =  k-1$.  This implies that
$k=3$, and in this case $G \in \cH_2$.

Finally, we may assume there do not exist distinct vertices $x$ and $y$ in $J$ and distinct vertices $w_1$ and $w_2$, such that $w_1 \in N(x)\cap I$ and $w_2 \in N(y) \cap I$.  Since $|I| \ge 2$ and $G$ is connected, it follows that $|J|=1$.  Without loss of generality we assume $J=\{x_1\}$ and every vertex of $I$ is a leaf adjacent to $x_1$.
Let $M = \{x_1x_2, x_1x_3, \dots, x_1x_{k-1}\}$ and let $M' = \{x_1x_2, x_3x_4, \dots, x_{j}x_{j+1}\}$, where $j=k-1$ if $k$ is even and $j = k-2$ if $k$ is odd.
It is easy to see that both $M$ and $M'$ are minimal edge dominating sets, which implies that $k-2=|M| = |M'| = \left\lfloor \frac{k}{2}\right\rfloor$.  It follows
that $k \in \{3,4\}$.  If $k=4$, then $G\in \cH_1$.  On the other hand, $k=3$ gives a contradiction since it yields a graph obtained by attaching at least two leaves adjacent to
one fixed vertex of $K_3$.  This graph is not well-edge-dominated as can be easily shown.  This completes the proof.
\end{proof}

\section{Cartesian products} \label{sec:CartesianWED}

This section is devoted to proving our characterization of well-edge-dominated Cartesian products.

\vskip 5mm
\begin{lemma} \label{lem:CPnotWED}
Let $G$ and $H$ be nontrivial, connected graphs such that at least one of $G$ or $H$ has order at least $3$.
If $G$ has a perfect matching, then $G\Box H$ is not well-edge-dominated.
\end{lemma}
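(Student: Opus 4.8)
The plan is to argue by contradiction, invoking the structure theorem for well-edge-dominated graphs that contain a perfect matching (Theorem~\ref{thm:wedrm}). So suppose $G\Box H$ is well-edge-dominated. Since $G$ and $H$ are connected, $G\Box H$ is connected. Moreover $G\Box H$ has a perfect matching: if $M$ is a perfect matching of $G$, then the union of one copy of $M$ inside each $G$-fiber $G^h$, $h\in V(H)$, is a perfect matching of $G\Box H$, because the $G$-fibers partition $V(G\Box H)$ and each is isomorphic to $G$. Hence Theorem~\ref{thm:wedrm} applies and forces $G\Box H=K_4$ or $G\Box H=K_{n,n}$ for some $n\ge 1$.

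Next I would record a lower bound on the order. Since $G$ has a perfect matching, $n(G)$ is even; if $n(G)\ge 3$ then in fact $n(G)\ge 4$ and $n(H)\ge 2$, while if $n(G)=2$ then the hypothesis that some factor has order at least $3$ forces $n(H)\ge 3$. In either case $n(G\Box H)=n(G)\,n(H)\ge 6$, so $G\Box H\ne K_4$.

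It remains to exclude $G\Box H=K_{n,n}$, which is the main point. Here I would use two standard facts about the Cartesian product: $\operatorname{diam}(G\Box H)=\operatorname{diam}(G)+\operatorname{diam}(H)$, and every $G$-fiber of $G\Box H$ induces a copy of $G$. Since $G$ and $H$ are nontrivial and connected, $\operatorname{diam}(G),\operatorname{diam}(H)\ge 1$, so $\operatorname{diam}(G\Box H)\ge 2$; as $\operatorname{diam}(K_{n,n})\le 2$, with equality forcing $n\ge 2$, we obtain $\operatorname{diam}(G)=\operatorname{diam}(H)=1$, i.e.\ $G$ and $H$ are complete graphs. But $K_{n,n}$ is triangle-free and contains a copy of $G$ (a $G$-fiber), so $G$ is triangle-free; a complete, triangle-free graph has order at most $2$, and being nontrivial it equals $K_2$. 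Likewise $H=K_2$, so $G\Box H=K_2\Box K_2=C_4$, of order $4<6$, contradicting the bound above. Therefore $G\Box H$ is not well-edge-dominated.

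The only slightly delicate step is the last one, showing that $K_{n,n}$ cannot be a nontrivial Cartesian product except in the already-excluded case $n=2$; but it reduces to the two cited facts (additivity of the diameter and the fibers being isomorphic copies of the factors) together with the small-case bookkeeping, so no computation is needed. As an alternative one could avoid Theorem~\ref{thm:wedrm} entirely and argue directly, comparing the fiberwise perfect matching $\bigcup_{h\in V(H)}M$ (of size $n(G)n(H)/2$) with a strictly smaller minimal edge dominating set built from a star-like structure in a single fiber, but the route above is shorter and reuses machinery already established.
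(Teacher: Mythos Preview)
Your proof is correct and follows essentially the same approach as the paper: lift a perfect matching of $G$ to $G\Box H$, invoke the structure theorem for well-edge-dominated (equivalently, randomly matchable) graphs with a perfect matching, and then argue that neither a large complete graph nor a complete bipartite graph can arise as a nontrivial Cartesian product of order at least $6$. The paper cites Theorem~\ref{thm:rm} rather than Theorem~\ref{thm:wedrm} and simply asserts the final step without justification; your version supplies a clean proof of that assertion via diameter additivity and triangle-freeness of $K_{n,n}$, which is a welcome elaboration but not a genuinely different route.
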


\begin{proof}
Suppose $G$ admits a perfect matching $M$ and suppose for the sake of contradiction that $G\Box H$ is well-edge-dominated.  By ``copying'' $M$ to each
$G$-fiber we see that $G \Box H$ also has a perfect matching. Suppose $G\Box H$ has order $2n$.  By Theorem~\ref{thm:rm}, it follows that $G\Box H = K_{2n}$ or
$G\Box H = K_{n,n}$.  This is a contradiction since no graph of order at least $6$  that is complete or complete bipartite is the Cartesian product of nontrivial
factors.
\end{proof}

\begin{prop}\label{prop:factorswed}
Let $G$ and $H$ be nontrivial connected graphs, neither of which has a perfect matching.  If $G\Box H$ is well-edge-dominated, then both $G$ and $H$ are well-edge-dominated.
\end{prop}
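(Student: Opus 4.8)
The plan is to use Lemma~\ref{lem:matchremove} to strip $G\Box H$ down to a disjoint union of copies of $G$. Since the hypotheses on $G$ and $H$ are symmetric, it suffices to prove that $G$ is well-edge-dominated; the identical argument with the roles of the two factors exchanged then yields that $H$ is well-edge-dominated. So I want to exhibit a matching $M$ of $G\Box H$ for which $(G\Box H)-N_e[M]$ consists of a disjoint union of one or more copies of $G$ together with some isolated vertices: Lemma~\ref{lem:matchremove} guarantees that such a graph is well-edge-dominated, and since a graph is well-edge-dominated exactly when each of its components is, this forces $G$ itself to be well-edge-dominated.

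Here is the matching. As $H$ has no perfect matching, fix a maximal matching $M_H=\{a_1b_1,\dots,a_kb_k\}$ of $H$ (note $k\ge 1$, since $H$ is nontrivial and connected) and let $U\subseteq V(H)$ be the set of vertices missed by $M_H$. Then $U\neq\emptyset$, because a maximal matching covering every vertex would be perfect, and $U$ is independent in $H$, since an edge with both ends in $U$ could be added to $M_H$. Now put
\[
M=\bigl\{\{(g,a_i),(g,b_i)\}:g\in V(G),\ i\in[k]\bigr\}\,,
\]
the union over $i$ of the ``vertical'' perfect matchings between the $G$-fibers $G^{a_i}$ and $G^{b_i}$. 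Two distinct edges of $M$ either lie in disjoint pairs of fibers or use distinct vertices of $G$, so $M$ is indeed a matching of $G\Box H$, and $M$ covers every vertex of every fiber $G^{a_i}$ and $G^{b_i}$.

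The heart of the argument is the claim that
\[
N_e[M]=E(G\Box H)\setminus\bigcup_{u\in U}E(G^u)\,,
\]
which I would verify by checking the three kinds of edges of the product. A horizontal edge of $G^{a_i}$ or $G^{b_i}$ shares a vertex with an edge of $M$, hence lies in $N_e[M]$. For a vertical edge $\{(g,h),(g,h')\}$ with $hh'\in E(H)$, independence of $U$ gives that at least one of $h,h'$ is covered by $M_H$, say $h\in\{a_i,b_i\}$; then the edge $\{(g,a_i),(g,b_i)\}\in M$ either equals it or meets it, so it again lies in $N_e[M]$. Finally, a horizontal edge of a fiber $G^u$ with $u\in U$ has both endpoints in $G^u$, while every edge of $M$ has both endpoints in some fiber $G^{a_i}$ or $G^{b_i}$ with $a_i,b_i\notin U$; since $u\notin\{a_i,b_i\}$, no edge of $M$ meets it, so it is not in $N_e[M]$. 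This is precisely the point where the maximality of $M_H$ does the work: it is what simultaneously forces every vertical edge to be dominated and leaves the fibers over $U$ entirely untouched, and I expect it to be the only genuinely delicate step.

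With the claim in hand, $(G\Box H)-N_e[M]$ has edge set $\bigcup_{u\in U}E(G^u)$, so its components are exactly the fibers $G^u\cong G$ for $u\in U$ (each of which is a nontrivial connected graph, as $G$ is) together with the isolated vertices coming from the remaining fibers. By Lemma~\ref{lem:matchremove} this graph is well-edge-dominated, hence so is each of its components; since $U\neq\emptyset$, we conclude that $G$ is well-edge-dominated. Running the same construction with $G$ and $H$ interchanged (which uses that $G$ has no perfect matching) shows that $H$ is well-edge-dominated, completing the proof.
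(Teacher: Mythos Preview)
Your proof is correct and follows essentially the same approach as the paper: copy a maximal matching of one factor into every fiber, apply Lemma~\ref{lem:matchremove}, and observe that the surviving nontrivial components are copies of the other factor. The paper presents this more tersely and happens to start with a matching in $G$ to conclude $H$ is well-edge-dominated (and then says ``similarly''), whereas you start with a matching in $H$, but the argument is identical; your careful verification of $N_e[M]$ by edge type is a nice addition the paper leaves implicit.
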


\begin{proof} Let $M= \{e_1, \dots, e_k\}$ be any maximal matching in $G$ where $e_i = x_iy_i$ for $i \in [k]$. Let $H$ be any connected graph with $V(H) = \{h_1, \dots, h_n\}$. Note that
\[P= \bigcup_{j=1}^n \bigcup_{i=1}^k \{(x_i, h_j)(y_i, h_j)\}\]
is a matching in $G\Box H$. By Lemma~\ref{lem:matchremove}, $G\Box H - N_e[P]$ is well-edge-dominated since $G\Box H$ is well-edge-dominated. Let $I = V(G) - S(M)$.
Since $M$ is a maximal matching that is not a perfect matching, the set $I$ is nonempty and independent.
Therefore, the nontrivial components of $G\Box H - N_e[P]$ are isomorphic to $H$. This implies $H$ is well-edge-dominated. Similarly, $G$ is well-edge-dominated.
\end{proof}

\begin{lemma} \label{lem:nopmnotWED}
If $G$ and $H$ are connected, nontrivial graphs neither of which has a perfect matching, then $G\Box H$ is not well-edge-dominated.
\end{lemma}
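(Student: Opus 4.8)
The plan is to argue by contradiction and reduce to the two propositions just proved. Suppose $G$ and $H$ are connected, nontrivial, each without a perfect matching, and $G \Box H$ is well-edge-dominated. Then Proposition~\ref{prop:factorswed} forces both $G$ and $H$ to be well-edge-dominated in their own right. The key structural input is that a well-edge-dominated graph with no perfect matching still cannot be ``too large'' relative to its smallest maximal matching; more precisely, I would first establish (or invoke, if it is a routine consequence of the definitions) that a connected well-edge-dominated graph $G$ with $i'(G)=\gedge(G)=\Gedge(G)=\alpha'(G)$ either has a perfect matching or has a rather restricted shape. In fact the cleanest route is to produce, inside $G \Box H$, two minimal edge dominating sets of different sizes directly.

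First I would pick a smallest maximal matching $M=\{x_1y_1,\dots,x_ky_k\}$ in $G$, so $k=i'(G)=\alpha'(G)$ since $G$ is well-edge-dominated, and let $I=V(G)-S(M)\neq\emptyset$ be the (independent, nonempty) set of $M$-unsaturated vertices; do the same for $H$, obtaining $M'=\{x'_1y'_1,\dots,x'_\ell y'_\ell\}$ with unsaturated independent set $I'\neq\emptyset$. The ``product'' matching $P$ as in the proof of Proposition~\ref{prop:factorswed} together with its ``transpose'' $P'$ (copying $M'$ across every $H$-fiber) gives me a large matching $P\cup P'$ that is in fact maximal in $G\Box H$: a vertex $(g,h)$ is saturated by $P$ unless $g\in I$, saturated by $P'$ unless $h\in I'$, and if $g\in I$ and $h\in I'$ then $(g,h)$ is isolated in $G\Box H - N_e[P\cup P']$ only if it has no neighbor, but its neighbors $(g,h')$ with $hh'\in E(H)$ are saturated by $P'$ and $(g',h)$ with $gg'\in E(G)$ are saturated by $P$ — so $P\cup P'$ is maximal. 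Its size is $k n(H)+\ell n(G)-k\ell$ (correcting for the $k\ell$ edges double-counted... actually $P$ and $P'$ are edge-disjoint, so the size is exactly $k\,n(H)+\ell\,n(G)$), hence $i'(G\Box H)=\alpha'(G\Box H)\ge k\,n(H)+\ell\,n(G)$.

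The contradiction then comes from building a \emph{small} minimal edge dominating set. Here I would use $I\neq\emptyset$ and $I'\neq\emptyset$ crucially: pick $a\in I$, $b\in I'$. The vertex $(a,b)$ has all its neighbors of the form $(a,h)$ or $(g,b)$; I want an edge dominating set that exploits the ``cross'' through $(a,b)$ to dominate many edges cheaply, combined with $P\cup P'$-style edges only where forced, so that its total size is strictly smaller than $k\,n(H)+\ell\,n(G)$. Concretely, since $G$ is well-edge-dominated with no perfect matching it has $k=\alpha'(G)<n(G)/2$, so $n(G)\ge 2k+1$ and likewise $n(H)\ge 2\ell+1$; I expect that a careful edge dominating set using the fibers $^aH$ and $G^b$ plus a bounded number of extra edges has size on the order of $\alpha'(H)+\alpha'(G)+O(1)$ or at worst linear in $n(G)+n(H)$ rather than the product-scale $k\,n(H)+\ell\,n(G)$, contradicting well-edge-domination once both factors are large enough; the small residual cases (where $n(G)$ or $n(H)$ is tiny, e.g.\ $3$) get cleaned up by hand, possibly by appealing to Lemma~\ref{lem:matchremove} to strip a matching and land on a non-well-edge-dominated complete bipartite graph via Lemma~\ref{lemma:nonwed}.

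The main obstacle will be the construction and minimality verification of the small edge dominating set: it is easy to dominate all edges but tricky to do so with a set that is simultaneously \emph{minimal} (every edge has a private neighbor) and genuinely small, because the fibers $^aH$ and $G^b$ still contain many edges and one cannot use a perfect matching of them (they have none). I anticipate the right move is not to dominate via $^aH$ and $G^b$ directly but rather to take a minimal edge dominating set $D_G$ of $G$ and $D_H$ of $H$ (of sizes $\gedge(G)=k$ and $\gedge(H)=\ell$ since the factors are well-edge-dominated), ``copy'' $D_H$ into a single fiber $^{a}H$ and $D_G$ into a single fiber $G^{b}$, and then show that the union, after adding only $O(1)$ correction edges to handle edges in fibers not through $a$ or $b$, fails to dominate unless we pad it up to something of size comparable to $k\,n(H)+\ell\,n(G)$ — which would then say $\Gedge(G\Box H)$ is much larger than $\gedge(G\Box H)$, again a contradiction. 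Getting the bookkeeping on private neighbors right in this union is where the real work lies; everything else is formal.
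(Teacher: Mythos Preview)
Your proposal has a concrete error and a strategic gap.

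The concrete error: your set $P\cup P'$ is \emph{not} a matching. The edges are indeed edge-disjoint ($P$ lives in $G$-fibers, $P'$ in $H$-fibers), but they are not vertex-disjoint. Any vertex $(x_i,x'_j)$ with $x_i\in S(M)$ and $x'_j\in S(M')$ is incident to both $(x_i,x'_j)(y_i,x'_j)\in P$ and $(x_i,x'_j)(x_i,y'_j)\in P'$. So $P\cup P'$ has many vertices of degree $2$ and cannot serve as your large maximal matching. The paper avoids this by copying $M'$ into \emph{every} $H$-fiber (this is $P_1$) but then copying $M$ only into the $G$-fibers $G^h$ with $h\in I_H=V(H)-S(M')$ (this is $P_2$); that way the two pieces are vertex-disjoint and $P_1\cup P_2$ really is a maximal matching, of size $s\,n(G)+(n(H)-2s)r$.

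The strategic gap: your plan to build a minimal edge dominating set of size roughly $\alpha'(G)+\alpha'(H)+O(1)$, or even linear in $n(G)+n(H)$, cannot work. Every $H$-fiber $^gH$ has edges, and to dominate them you must cover, for each $g\in V(G)$, at least one endpoint of every edge of $H$ --- so $\gedge(G\Box H)$ is genuinely of product scale. The paper does not try to find a small set at all. Instead it builds a \emph{second} minimal edge dominating set $Q$ of comparable size, using two maximal matchings $M_1,M_2$ of $H$ with $S(M_1)\ne S(M_2)$, a maximal independent set $J$ of $G$, and a minimal edge dominating set $N_1$ of $G-J$ with $|N_1|=t$. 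Equating $|P_1\cup P_2|=|Q|$ and simplifying yields $(|S(M_1)\cap S(M_2)|-2s)(r-t)=0$, forcing $r=t$. Since $N_1$ was arbitrary this shows $G-J$ is well-edge-dominated with $\gedge(G-J)=r$; a short argument then shows $G-J$ has a perfect matching, hence is $K_4$ or $K_{n,n}$ by Theorem~\ref{thm:wedrm}, and maximality of $J$ forces $|J|=1$ and $G=K_5$, a contradiction. That equality-of-sizes trick, not a large-versus-small comparison, is the missing idea in your outline.
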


\begin{proof} Suppose to the contrary that there exist connected, nontrivial graphs $G$ and $H$, neither of which has a perfect matching, such that $G\Box H$ is well-edge-dominated. Thus, we may assume $n(G) \ge 3$ and $n(H) \ge 3$. By Proposition~\ref{prop:factorswed}, both $G$ and $H$ are well-edge-dominated. Let $r$ be the matching number of $G$ and $s$ be the matching number of $H$. Choose any maximal matching $M_1= \{e_1, \dots, e_s\}$ in $H$ and write $e_i = x_iy_i$ for $i \in [s]$. Let $I_H = V(H) - S(M_1)$.   Choose any maximal matching $M_G=\{f_1, \dots, f_r\}$ in $G$ and write $f_i = u_iv_i$ for $i \in [r]$. Let $I_G = V(G) - S(M_G)$. Let
\[P_1 = \bigcup_{g \in V(G)} \bigcup_{i=1}^s \{(g, x_i)(g, y_i)\}\]
and
\[P_2 = \bigcup_{h \in I_H}\bigcup_{i=1}^r \{(u_i, h)(v_i, h)\}.\]

Note that $P_1 \cup P_2$ is a maximal matching in $G\Box H$ since the only vertices in $G\Box H$ that are not covered by $P_1 \cup P_2$ are in $I_G \times I_H$, which is
an independent set in $G \Box H$.  Also, \[|P_1 \cup P_2| =  sn(G) + (n(H) - 2s)r.\]

Next, choose a maximal matching $M_2 = \{a_1, \dots, a_s\}$ in $H$ such that $S(M_1) \ne S(M_2)$ and write $a_i = w_iz_i$ for $i \in [s]$. Let $L = V(H) - (S(M_1)\cup S(M_2))$, $L'=S(M_1) - S(M_2)$, and $L'' = S(M_2) - S(M_1)$.  Choose any maximal independent set $J$ in $G$ and let  $N_1 = \{b_1c_1, \dots, b_tc_t\}$ be a minimal edge dominating set of $G - J$.

Let
\[Q_1 = \bigcup_{g \in V(G) - J}\bigcup_{i=1}^s \{(g, x_i)(g, y_i)\},\]

\[Q_2 = \bigcup_{g \in J}\bigcup_{i=1}^s \{(g, w_i)(g, z_i)\},\]
\[Q_3 = \bigcup_{h \in L}\bigcup_{i=1}^r \{(u_i, h)(v_i, h)\},\]
and
\[Q_4 = \bigcup_{h \in L''}\bigcup_{i=1}^t (\{(b_i, h)(c_i, h)\}.\]

We claim that $Q = \cup _{i=1}^4 Q_i$ is a minimal edge dominating set of $G\Box H$. If $g \in J$, then $Q_2 \cap E(^gH)$ is an edge dominating set of $^gH$. If $g \in V(G) - J$, then $Q_1 \cap E( ^gH)$ is an edge dominating set of $^gH$. Thus, for every $h_1h_2 \in E(H)$ and every $g \in V(G)$, the set $Q_1 \cup Q_2$ dominates the edge
$(g, h_1)(g, h_2)$.  Note that we can write $V(H)$ as a weak partition \[V(H) = L \cup L' \cup L'' \cup (S(M_1) \cap S(M_2)).\]

If $h \in L'$ and $g_1g_2 \in E(G)$, then $Q_1$ dominates the edge $(g_1, h)(g_2, h)$ since $J \times L'$ is independent and every vertex of $(V(G) - J) \times L'$ is covered by $Q_1$. If $h \in S(M_1) \cap S(M_2)$ and $g \in V(G)$, then $(g, h)$ is covered by $Q_1 \cup Q_2$. If $h \in L''$ and $g \in J$, then $(g, h)$ is covered by $Q_2$. On the other hand, if $h \in L''$ and $(g_1, h)(g_2, h)$ is an edge of $G\Box H$ where neither $g_1$ nor $g_2$ is in $J$, then $(g_1, h)(g_2, h)$ is dominated by $Q_4$. Finally, $Q_3 \cap E(G^h)$ is an edge dominating set of $G^h$,  for any $h \in L$ since $M_G$ is a maximal matching in $G$. Therefore, $Q$ is an edge dominating set of $G\Box H$.

Next, we show $Q$ is in fact a minimal edge dominating set. Let $e$ be an arbitrary edge in $Q$. If $e \in Q_1 \cup Q_2\cup Q_3$, then $Q - \{e\}$ does not dominate $e$. If $e \in Q_4$, say $e = (g_1, h)(g_2, h)$ where $h \in L''$, then some edge in the subgraph induced by $(V(G) - J) \times \{h\}$ is not dominated by $Q - \{e\}$ since $N_1$ is a minimal edge dominating set of $G-J$. Thus, $Q$ is a minimal edge dominating set of $G\Box H$.

Since $G\Box H$ is well-edge-dominated, $|P_1 \cup P_2| = |Q|$ where
\begin{eqnarray*}
|Q| &=& |Q_1| + |Q_2| + |Q_3| + |Q_4|\\
&=& (n(G) - |J|)s + |J|s + |L|r + |L''|t\\
&=& n(G)s + (n(H) - |S(M_1)| - |S(M_2)| + |S(M_1)\cap S(M_2)|)r\\
& &   +(2s- |S(M_1)\cap S(M_2)|)t\\
&=& n(G)s + (n(H) - 4s + |S(M_1)\cap S(M_2)|)r +(2s- |S(M_1)\cap S(M_2)|)t.
\end{eqnarray*}
In particular, this means
\[n(G)s + (n(H) - 2s)r = n(G)s + (n(H) - 4s + |S(M_1)\cap S(M_2)|)r +(2s- |S(M_1)\cap S(M_2)|)t\]
or equivalently
\[( |S(M_1)\cap S(M_2)|-2s)(r-t) = 0.\]
Note that $|S(M_1) \cap S(M_2)| \ne 2s$ since $S(M_1) \ne S(M_2)$. Thus, $r=t$ and every minimal edge dominating set of $G-J$ has cardinality $r$ since $N_1$ was chosen arbitrarily. It follows that $G-J$ is well-edge-dominated and $\gamma'(G-J)= \gamma'(G)$. Furthermore, we claim $G-J$ contains a perfect matching. Suppose to the contrary that $G-J$ does not admit a perfect matching. Let $A$ be any maximal matching of $G-J$ and let $x$ be a vertex of $G-J$ that is not covered by $A$. Since $J$ is a maximal independent set of $G$, there exists a vertex $y \in J$ where $xy \in E(G)$. However, $A \cup \{xy\}$ is a matching in $G$ of cardinality $r+1$, which is a contradiction.
Hence, $A$ is a perfect matching of $G-J$, and so $G-J$ is a well-edge-dominated, randomly matchable graph.  By Theorem~\ref{thm:wedrm}, $G -J = K_4$ or $G-J = K_{n, n}$
for some $n \ge 1$.

Notice that since $J$ is assumed to be a maximal independent set, each vertex of $G-J$ is adjacent to a vertex in $J$.
Let $e=wz$ be an arbitrary edge in $A$. If there exists a pair $u,v$ of distinct vertices in $J$ such that $u \in N(w) \cap J$ and $v \in N(z) \cap J$, then $(A - \{wz\}) \cup \{uw, vz\}$ is a  matching in $G$ of cardinality $r+1$, which is a contradiction.  It follows that $|N(w) \cap J|=1=|N(z) \cap J|$, and in fact $N(w) \cap J= N(z) \cap J$.  Since every edge of $G-J$
can be extended to a perfect matching of $G-J$ and since $G-J$ is connected, it follows that $|J|=1$ and therefore $G$ is a complete graph.  This implies that $G=K_5$, which
is not well-edge-dominated.  This contradiction completes the proof.
\end{proof}

Theorem~\ref{thm:cpWED} is restated
here for ease of reference.
\vskip3mm
\noindent \textbf{Theorem~\ref{thm:cpWED}} \emph{
If $G$ and $H$ are two connected, nontrivial graphs, then $G\Box H$ is well-edge-dominated if and only if $G\Box H = K_2 \Box K_2$.
}

\begin{proof}
The Cartesian product $K_2 \Box K_2$ is well-edge-dominated.  Conversely, suppose $G$ and $H$ are connected and nontrivial such that $G\Box H$ is well-edge-dominated.
By  Lemma~\ref{lem:nopmnotWED}, at least one of $G$ or $H$ has a perfect matching, and then by Lemma~\ref{lem:CPnotWED}  it follows that $G\Box H = K_2 \Box K_2$.
\end{proof}

\section{Open Questions}

In their study of connected, equimatchable graphs of girth at least $5$, Frendrup, Hartnell and Vestergaard~\cite{fhv-2010} characterized
the connected, well-edge-dominated graphs of girth at least $5$.  In particular, they proved the following result.

\begin{theorem} {\rm (\cite{fhv-2010})} \label{thm:fhvWED}
If $G$ is a connected graph with $g(G) \ge 5$, then $G$ is well-edge-dominated if and only if $G \in \{K_2,C_5,C_7\}$ or $G$ is bipartite with  partite sets $V_1$ and $V_2$
such that $V_1$ is the set of all support vertices of $G$.
\end{theorem}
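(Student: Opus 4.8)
The plan is to deduce the forward implication from the already-available classification of equimatchable graphs and to settle the reverse implication by a direct structural argument. Since every well-edge-dominated graph is equimatchable (this follows from the chain $\gedge(G)\le i'(G)\le \alpha'(G)\le\Gedge(G)$ recorded in the Preliminaries), a connected well-edge-dominated graph $G$ with $g(G)\ge 5$ is in particular a connected equimatchable graph with no cycle of length less than $5$. By the classification of Frendrup, Hartnell and Vestergaard~\cite{fhv-2010} quoted in the introduction, such a $G$ is $C_5$, is $C_7$, or belongs to the family $\cC$ consisting of $K_2$ together with all bipartite graphs one of whose partite sets consists of all its support vertices. Unpacking membership in $\cC$ shows $G\in\{K_2,C_5,C_7\}$ or $G$ is bipartite with partite sets $V_1,V_2$ where $V_1$ is the set of all support vertices of $G$, which is exactly the asserted list.

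For the reverse implication I would treat the sporadic graphs and the infinite family separately. The graphs $K_2$, $C_5$ and $C_7$ are well-edge-dominated by direct inspection. The substance is to show that every bipartite graph $G$ whose partite set $V_1$ is precisely the set of all support vertices is well-edge-dominated, and I would do this by proving that every minimal edge dominating set $F$ has cardinality $|V_1|$. First, every edge dominating set covers $V_1$: for each $v\in V_1$ choose a leaf neighbor $\ell$ (one exists since $v$ is a support vertex); the pendant edge $v\ell$ is adjacent only to edges incident with $v$, so $F$ must contain an edge at $v$. As each edge of the bipartite graph $G$ meets $V_1$ in exactly one vertex, this forces $|F|\ge|V_1|$, and choosing one pendant edge at each support vertex exhibits an edge dominating set of size $|V_1|$, so $\gedge(G)=|V_1|$.

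The key step — and the main obstacle — is the upper bound $|F|\le|V_1|$ for \emph{minimal} $F$, which I would obtain from a private-edge-neighbor argument. Suppose some $v\in V_1$ were covered by two edges $va,va'\in F$ with $a\ne a'$ in $V_2$; I claim $va$ then has no private edge neighbor. Any edge adjacent to $va$ is incident with $v$ or with $a$. An edge incident with $v$ is adjacent to $va'\in F$, hence dominated by $F-\{va\}$. An edge incident with $a$ has the form $ac$ with $c\in V_1$; since $c$ is a support vertex, the covering property from the previous paragraph supplies an edge of $F$ at $c$, necessarily different from $va$, and this edge dominates $ac$. As $va$ itself is dominated by $va'$, we obtain $N_e[va]\subseteq N_e[F-\{va\}]$, contradicting minimality of $F$. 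Hence each $v\in V_1$ lies on exactly one edge of $F$, so $|F|=|V_1|=\gedge(G)$ and $G$ is well-edge-dominated. I note that this argument uses only that every neighbor of a $V_2$-vertex is a support vertex and not the girth hypothesis, so the bound $g(G)\ge 5$ enters solely through the equimatchable classification invoked in the forward direction; the only remaining care is to confirm that the three sporadic graphs and the degenerate case $K_2$ are correctly accounted for in both directions, and that the definition of $\cC$ excludes cycle-like graphs such as $C_6$ precisely because their (empty) support-vertex set cannot be a partite set.
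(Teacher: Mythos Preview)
The paper does not supply its own proof of this theorem; it is quoted in Section~7 as a result of Frendrup, Hartnell and Vestergaard~\cite{fhv-2010}, so there is no in-paper argument to compare against. Your proof is correct and in fact mirrors exactly how the present paper frames the result: the introduction records both ingredients you use, namely the equimatchable classification for girth~$\ge 5$ (giving $C_5$, $C_7$, or membership in $\cC$) and the statement that every graph in $\cC$ is well-edge-dominated, attributing both to~\cite{fhv-2010}.

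Your private-edge-neighbor argument for the reverse implication is sound. The only place to be slightly careful is the clause ``necessarily different from $va$'': this holds because the edge of $F$ at $c$ is incident with $c\in V_1$, while $va$ meets $V_1$ only in $v$, and you are in the case $c\ne v$ (the case $c=v$ is the edge $va$ itself, already handled via $va'$). Your closing remark that the girth hypothesis is used only through the equimatchable classification is accurate, and your observation about $C_6$ correctly explains why the family~$\cC$ does not inadvertently include leafless bipartite graphs.
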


In Theorem~\ref{thm:K3-freeWED} of this paper we showed that only one additional graph, namely $H^*$, is added to the list of connected, well-edge-dominated graphs
if the girth restriction is lowered to $4$ but we now require that the graph be nonbipartite.

A natural problem now presents itself.
\begin{problem}
Find a structural characterization of the class of connected, bipartite graphs of girth $4$ that are well-edge-dominated.
\end{problem}
By Theorem~\ref{thm:wedrm} this class contains $K_{n,n}$, for any $n\ge 2$ and by Theorem~\ref{thm:cpWED}
it does not contain any nontrivial Cartesian products other than $K_2 \Box K_2$.

For graphs that contain a triangle, we have characterized the connected, split graphs that are well-edge-dominated in Theorem~\ref{thm:split}.  Determining
the structure for arbitrary well-edge-dominated graphs of girth $3$ is an interesting problem.
\begin{problem}
Find a structural characterization of the class of connected graphs of girth $3$ that are well-edge-dominated.
\end{problem}

\vfill

\end{document}